\newcommand{\D}{\displaystyle}
\newcommand{\R}{{\mathbb R}}   %  set of real numbers
\newcommand{\N}{{\mathbb N}}   %  set of positive integers
\newtheorem{theorem}{Theorem}
\newtheorem{lemma}[theorem]{Lemma}
\newtheorem{remark}[theorem]{Remark}
\newenvironment{proof}[1][Proof]{\textbf{#1.} }{\hfill \raisebox{-0.1em}{$\Box$}\\}
\begin{document}
%%%% header %%%%%%
\markboth{P.~Skrzypacz and D.~Wei}{On the solvability of the Brinkman-Forchheimer-extended Darcy equation} 

%%%% title and authors %%%%%
\title{ON THE SOLVABILITY OF THE BRINKMAN-FORCHHEIMER-EXTENDED DARCY EQUATION}

\author{
%_{
  Piotr Skrzypacz\footnote{
  Dr. Piotr Skrzypacz, School of Science and Technology,
  Nazarbayev University,
  53 Kabanbay Batyr Ave., Astana 010000 Kazakhstan,
  %URL: {\tt http://www-ian.math.uni-magdeburg.de/home/schieweck},
  Email: {\it piotr.skrzypacz@nu.edu.kz}
  } ~and~ Dongming Wei\footnote{
  Dr. Dongming Wei, School of Science and Technology,
  Nazarbayev University,
  53 Kabanbay Batyr Ave., Astana 010000 Kazakhstan,
  %URL: {\tt http://www-ian.math.uni-magdeburg.de/home/schieweck},
  Email: {\it dongming.wei@nu.edu.kz}
  }~
}

%%%% maketitle %%%%%
\maketitle

\begin{abstract}
The nonlinear Brinkman-Forchheimer-extended Darcy equation is used
to model some porous medium flow in chemical reactors of packed bed type. 
The results concerning the existence and uniqueness of a weak solution 
are presented for nonlinear convective  flows in medium with nonconstant porosity and for small data. 
Furthermore, the finite element  approximations to the flow profiles  in the fixed bed reactor are presented for several Reynolds numbers
at the non-Darcy's range.
\end{abstract}

%%%%% Keywords %%%%%%%%%%%
\textbf{2010 Mathematics Subject Classification (MSC):}~
% 2010 Mathematics Subject Classification (MSC)
76D03,  %Incompressible viscous fluids, Existence, uniqueness, and regularity theory 
35Q35  %Equations of mathematical physics and other areas of application: PDEs in connection with fluid mechanics

\textbf{Keywords:}~
Brinkman-Forchheimer Equation, Packed Bed Reactors, Existence and Uniqueness of Solution\\

%==================================================================================
%  Intro
%==================================================================================
\section{{I}ntroduction}
%==================================================================================
In this section we introduce the mathematical model describing incompressible isothermal flow in porous medium without reaction. The
considered equations for the velocity and pressure fields are for flows in fluid saturated porous media. Most of
research results for flows in porous media are based on  the Darcy equation which is considered to be  a suitable model at a small range of Reynolds numbers.  However, there are restrictions of Darcy equation for modeling some porous medium flows, e.g. in  closely packed
medium, saturated fluid flows at slow velocity but with relatively large Reynolds numbers. The flows in such closely packed medium behave nonlinearly and can not be modelled accurately by the Darcy equation which is linear. The deficiency can be circumvented with the Brinkman--Forchheimer-extended Darcy law for flows in closely packed media, which leads to the following model:
Let $\Omega\subset\R^n$, $n=2, 3$, represent the reactor channel. We denote
its boundary by $\Gamma=\partial\Omega$. The conservation of volume-averaged
values of momentum and mass in the packed reactor reads as follows
\begin{equation}\label{s2eq1}
\begin{array}{rcr}
\displaystyle
-\textrm{div}\,\left(\varepsilon \nu \nabla \boldsymbol u-\varepsilon\boldsymbol
 u\otimes\boldsymbol u \right)+\frac{\varepsilon}{\varrho}\nabla p + \sigma(\boldsymbol u)=\boldsymbol f&\textrm{in}&\Omega\,,\\
\textrm{div}\,(\varepsilon\boldsymbol u)=0&\textrm{in}&\Omega\,,
%\end{aligned}\quad\textrm{in}\quad\Omega\;,
\end{array}
\end{equation}
where $\boldsymbol u\,:\Omega\to\R^n$,~ $p\,:\Omega\to\R$ denote the unknown velocity and pressure, respectively. The
positive quantity $\varepsilon=\varepsilon(\boldsymbol x)$ stands for porosity which describes the proportion of the
non-solid volume to the total volume of material and varies spatially in general. The expression
$\sigma(\boldsymbol{u})$ represents the friction forces caused by the packing and will be specified later on. The
right-hand side $\boldsymbol f$ represents an outer force (e.g. gravitation), $\varrho$ the constant fluid density and
$\nu$ the constant kinematic viscosity of the fluid, respectively. The expression $\boldsymbol u\otimes\boldsymbol u$ symbolizes the dyadic product of $\boldsymbol u$ with itself. 

%For further discussion which of models can be used we refer to [Al].
The formula given by Ergun \cite{Ergun_1} will be used to model the influence
of the packing on the flow inertia effects
\begin{align}\label{s2eq2}
\sigma(\boldsymbol u)=150\nu\frac{(1-\varepsilon)^2}{\varepsilon^2d_p^2}\boldsymbol u
+1.75\frac{1-\varepsilon}{\varepsilon d_p}\boldsymbol u|\boldsymbol u|\;.
\end{align}
Thereby $d_p$ stands for the diameter of pellets and $|\cdot|$ denotes the Euclidean vector norm. The linear term in (\ref{s2eq2}) accounts for the head loss according to Darcy and  the quadratic term according to Forchheimer law, respectively. For the derivation of the equations, modelling and homogenization questions in porous media we refer to e.g. \cite{Bey_1, Hornung_1}. 
To close the system (\ref{s2eq1}) we prescribe  Dirichlet boundary condition
\begin{equation}\label{s2eq3}
\boldsymbol u\arrowvert_{\Gamma}=\boldsymbol g\,,
\end{equation}
whereby
\begin{equation}\label{zerocomp}
\int\limits_{\Gamma_i}\varepsilon\boldsymbol
g\cdot\boldsymbol n\,ds=0 
\end{equation}
has to be fulfilled on each connected component $\Gamma_i$ of the
boundary $\Gamma$.
%as natural consequence of integrating the
% second equation of \eqref{s2eq1} via theorem of Gauss. 
We remark
 that in the case of polygonally bounded domain the outer normal
 vector $\boldsymbol n$ has jumps and thus the above integral
 should be replaced by a sum of integrals over each side of
 $\Gamma$. 
The distribution of porosity $\varepsilon$ is assumed to satisfy the following bounds
\begin{equation}\label{A1}
0<\varepsilon_0\le \varepsilon(\boldsymbol x)\le
\varepsilon_1\le 1\quad\forall\,\boldsymbol x\in\Omega\tag{\textrm{A1}}\,,
\end{equation}
with some constants $0<\varepsilon_0,\;\varepsilon_1\le 1$. 

A comprehemsive account of fluid flows through porous media beyond the Darcy law's valid regimes and classified by the Reynolds number,  can be found in, e.g., \cite{Zhao_1}. Also, see \cite{Upton_1} for simulating pumped water levels in abstraction boreholes  using such nonlinear Darcy-Forchheimer law, and \cite{Grillo_1}, \cite{Sobieski_1}, and \cite{Lal_1} for recent referenes on this model.

In the next section we use the porosity distribution which is estimated for packed beds consisting of spherical particles and takes the near wall channelling effect into account. This kind of porosity distribution obeys assumption \eqref{A1}.
%proposed in \cite{Giese_1} and adapt it to the plane geometry 
%\begin{equation}
%\varepsilon(\moldsymbolgc x)=\varepsilon(y)=\varepsilon_\infty\left(1+1.36\,\exp{\left\{-5.0\frac{H-y}{d}\right\}}\right)\;,
%\end{equation}
% whereby $y$ describes the coordinate perpendicular to the flow direction in the reactor channel of the height $H$. This approximation
%of porosity takes the near-wall channelling into account.

Let us introduce dimensionless quantities 
\begin{align*}
\boldsymbol u^*=\frac{\boldsymbol u}{U_0}\,,\quad
p^*=\frac{p}{\varrho U_0^2}\,,\quad 
\boldsymbol{x}^*=\frac{\boldsymbol x}{d_p}\,,\quad
\boldsymbol{g}^*=\frac{\boldsymbol g}{U_0}\,,
\end{align*}
whereby $U_0$ denotes the magnitude of some reference velocity. For simplicity of notation we omit the asterisks. Then, the reactor flow problem reads in dimensionless form as follows
\begin{equation}\label{s2eq4}
\left\{\begin{array}{rclrl}
\displaystyle
-\textrm{div}\,\left(\frac{\varepsilon}{Re}\nabla \boldsymbol u-
\varepsilon\boldsymbol u\otimes\boldsymbol u \right)+\varepsilon\nabla p + 
\frac{\alpha}{Re}\boldsymbol u+\beta\boldsymbol u|\boldsymbol u|&=&\boldsymbol f &\textrm{in} &\Omega\,,\\
\textrm{div}\,(\varepsilon\boldsymbol u)&=&0 &\textrm{in} &\Omega\,,\\
\boldsymbol u&=&\boldsymbol g &\textrm{on} &\Gamma\,,
\end{array}\right.
\end{equation}
where 
\begin{equation}\label{alpha_beta_model}
\begin{split}
\alpha(\boldsymbol x)&=150\kappa^2(\boldsymbol x)\,,\qquad \beta(\boldsymbol x)=1.75\kappa(\boldsymbol x)
\end{split}
\end{equation} 
with 
\begin{equation}\label{kappa_model}
\kappa(\boldsymbol x)=\frac{1-\varepsilon(\boldsymbol x)}{\varepsilon(\boldsymbol x)}\,,
\end{equation}
and the Reynolds number is defined by 
$$Re=\frac{U_{0}\,d_p}{\nu}\,.$$ 
%$\varepsilon(y)=\varepsilon_\infty\left(1+1.36\exp{\left\{-5.0(H/d-y)\right\}}\right)$, 

The existence and uniqueness of solution of the nonlinear model (\ref{s2eq4}) with constant porosity and without the convective term has been established in \cite{Kaloni}. We will extend this result to the case when the porosity depends on the location and with the convective term in this work.
\begin{remark}\label{remarkNSE_1}
\eqref{s2eq4} becomes a Navier-Stokes problem if $\varepsilon\equiv 1$.
\end{remark}
{\bf Notation}~~Throughout the work we use the following notations for function
spaces. For $m\in\N_0$, $p\ge 1$ and bounded subdomain $G\subset\Omega$ let $W^{m,p}(G)$ be the
usual Sobolev space equipped with norm $\|\cdot\|_{m,p,G}$. If $p=2$, we denote the Sobolev space 
by $H^m(G)$ and use the standard abbreviations $\|\cdot\|_{m,G}$ and $|\cdot|_{m,G}$ for the norm 
and seminorm, respectively. We denote by $D(G)$ the space of $C^\infty(G)$ functions with compact 
support contained in $G$. Furthermore,  $H_0^m(G)$ stands for the closure of $D(G)$ with respect
to the norm $\|\cdot\|_{m,G}$. The counterparts spaces consisting of vector valued functions 
will be denoted by bold faced symbols like $\boldsymbol{H}^m(G):=[H^m(G)]^n$ or
$\boldsymbol{D}(G):=[D(G)]^n$. The $L^2$ inner product over $G\subset\Omega$ and
$\partial G\subset\partial\Omega$ will be denoted by $(\cdot,\cdot)_G$ and
$\langle\cdot,\cdot\rangle_{\partial G}$, respectively. In the case $G=\Omega$ the
domain index will be omitted. In the following we denote by $C$ the generic
constant which is usually independent of the model parameters, otherwise
dependences will be indicated.
%==================================================================================
%==================================================================================
%
%==================================================================================
\section{Existence and uniqueness results}\label{s3}
In the following the porosity $\varepsilon$ is assumed to belong to
$W^{1,3}(\Omega)\cap L^\infty(\Omega)$. We start with the weak formulation of problem \eqref{s2eq4} and look for its solution in suitable Sobolev spaces.
\subsection{Variational formulation}
Let $$L^2_0(\Omega):=\{ v\in L^2(\Omega): (v,1)=0\}$$ be the space
consisting of $L^2$ functions with zero mean value. We define the spaces
\begin{equation*}
\boldsymbol{X}:=\boldsymbol{H}^1(\Omega)\,,\quad
\boldsymbol{X}_0:=\boldsymbol{H}^1_0(\Omega)\,,\quad
Q:=L^2(\Omega)\,,\quad M:=L^2_0(\Omega)\,,
\end{equation*}
and
\begin{equation*}
\boldsymbol{V}:=\boldsymbol{X}_0\times M\,.
\end{equation*} 
Let us introduce the following bilinear forms 
\begin{equation*}
\begin{alignedat}{3}
&a:\,\boldsymbol{X}\times\boldsymbol{X}&\to\R\,,&&\qquad a(\boldsymbol u, \boldsymbol v)&=\frac{1}{Re}\bigl(\varepsilon\nabla\boldsymbol u,\nabla\boldsymbol v\bigr)\,,\\[1.5ex]
&b:\,\boldsymbol{X}\times Q&\to\R\,,&&\qquad b(\boldsymbol u,q)&=\bigl(\textrm{div}(\varepsilon\boldsymbol{u}),q\bigr)\,,\\[1.5ex]
&c:\,\boldsymbol{X}\times \boldsymbol{X}&\to\R\,,&&\qquad c(\boldsymbol u,\boldsymbol v)&=\frac{1}{Re}\bigl(\alpha\boldsymbol{u},\boldsymbol{v}\bigr)\,.\\
\end{alignedat}
\end{equation*}
Furthermore, we define the semilinear form
\begin{equation*}
d:\,\boldsymbol{X}\times\boldsymbol{X}\times\boldsymbol{X}\rightarrow\R\,,\qquad d(\boldsymbol w;\boldsymbol u,\boldsymbol v)=\bigl(\beta |\boldsymbol{w}|\boldsymbol{u},\boldsymbol{v}\bigr)\,,
\end{equation*}
and trilinear form
\begin{equation*}
n:\,\boldsymbol{X}\times\boldsymbol{X}\times\boldsymbol{X}\rightarrow\R\,,\qquad
n(\boldsymbol w,\boldsymbol u,\boldsymbol v)=\bigl((\varepsilon \boldsymbol w\cdot\nabla)\boldsymbol u,\boldsymbol v\bigr)\,.
\end{equation*}
We set
\begin{equation*}
A(\boldsymbol{w};\boldsymbol{u},\boldsymbol{v}):=a(\boldsymbol{u},\boldsymbol{v})+c(\boldsymbol{u},\boldsymbol{v})+n(\boldsymbol{w},\boldsymbol{u},\boldsymbol{v})+d(\boldsymbol{w};\boldsymbol{u},\boldsymbol{v})\,.
\end{equation*}
Multiplying momentum and mass balances in (\ref{s2eq4}) by test functions $\boldsymbol v\in\boldsymbol{X}_0$ and $q\in M$, respectively, and integrating by parts implies the weak formulation:\\[2ex]
\hspace*{1cm}Find $\displaystyle (\boldsymbol u,p)\in \boldsymbol{X}\times M$ with $\boldsymbol u\arrowvert_{\Gamma}=\boldsymbol g$\quad such that
\begin{equation}\label{s3eq5}
A(\boldsymbol{u};\boldsymbol{u},\boldsymbol{v})-b(\boldsymbol{v},p)+b(\boldsymbol{u},q)=(\boldsymbol{f},\boldsymbol{v})\quad\forall\;(\boldsymbol v,q)\in \boldsymbol{V}\,.
\end{equation}
First, we recall the following result from \cite{BernardiLaval}:
\begin{theorem}\label{s3thm1}
The mapping $u\mapsto \varepsilon u$ is an isomorphism from
$H^1(\Omega)$ onto itself and from $H^1_0(\Omega)$ onto
itself. It holds for all $u\in H^1(\Omega)$
\begin{equation*}
\|\varepsilon u\|_1\le
C\{\varepsilon_1+|\varepsilon|_{1,3}\}\,\|u\|_1\qquad\text{and}\qquad
\left\|\frac{u}{\varepsilon}\right\|_1\le
C\left\{\varepsilon_0^{-1}+\varepsilon_0^{-2}\,|\varepsilon|_{1,3}\right\}\|u\|_1\,.
\end{equation*}
\end{theorem}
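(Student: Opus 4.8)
The plan is to reduce everything to the Leibniz rule $\grad(\varepsilon u)=\varepsilon\,\grad u+u\,\grad\varepsilon$, valid for $u\in H^1(\Omega)$ and $\varepsilon\in W^{1,3}(\Omega)\cap L^\infty(\Omega)$ (justified by a routine truncation/density argument on the bounded domain $\Omega$), together with the Sobolev embedding $H^1(\Omega)\hookrightarrow L^6(\Omega)$, which is available precisely because $n\le 3$. First I would prove boundedness of $u\mapsto\varepsilon u$ on $H^1(\Omega)$. The $L^2$-part is immediate from \eqref{A1}: $\|\varepsilon u\|_0\le\varepsilon_1\|u\|_0$. For the gradient, $\|\grad(\varepsilon u)\|_0\le\varepsilon_1\,|u|_1+\|u\,\grad\varepsilon\|_0$, and the last term is handled by Hölder's inequality with exponents $6$ and $3$, $\|u\,\grad\varepsilon\|_0\le\|u\|_{L^6}\,\|\grad\varepsilon\|_{L^3}\le C\,\|u\|_1\,|\varepsilon|_{1,3}$. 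Summing the two contributions gives $\|\varepsilon u\|_1\le C\{\varepsilon_1+|\varepsilon|_{1,3}\}\,\|u\|_1$.

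Next I would show that the candidate inverse $u\mapsto u/\varepsilon$ is also bounded on $H^1(\Omega)$. The crucial observation is that $1/\varepsilon$ again belongs to $W^{1,3}(\Omega)\cap L^\infty(\Omega)$: by \eqref{A1} one has $\|1/\varepsilon\|_{L^\infty}\le\varepsilon_0^{-1}$, and since $\grad(1/\varepsilon)=-\varepsilon^{-2}\grad\varepsilon$ also $\|\grad(1/\varepsilon)\|_{L^3}\le\varepsilon_0^{-2}\,|\varepsilon|_{1,3}$. Repeating the estimate of the previous paragraph verbatim with $\varepsilon$ replaced by $1/\varepsilon$ yields $\|u/\varepsilon\|_1\le C\{\varepsilon_0^{-1}+\varepsilon_0^{-2}\,|\varepsilon|_{1,3}\}\,\|u\|_1$. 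Since pointwise $\varepsilon\cdot(u/\varepsilon)=u$ and $(\varepsilon u)/\varepsilon=u$, these two bounded linear operators are mutually inverse, so $u\mapsto\varepsilon u$ is an isomorphism of $H^1(\Omega)$ onto itself with exactly the stated norm bounds.

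For the claim on $H^1_0(\Omega)$ I would argue by density: given $u\in H^1_0(\Omega)$, pick $\phi_k\in D(\Omega)$ with $\phi_k\to u$ in $H^1(\Omega)$; each $\varepsilon\phi_k$ then lies in $H^1(\Omega)$ and has support contained in the compact set $\mathrm{supp}\,\phi_k\subset\Omega$, hence $\varepsilon\phi_k\in H^1_0(\Omega)$, and by the boundedness just established $\varepsilon\phi_k\to\varepsilon u$ in $H^1(\Omega)$, so $\varepsilon u\in H^1_0(\Omega)$. The same reasoning with $1/\varepsilon$ in place of $\varepsilon$ shows the inverse maps $H^1_0(\Omega)$ into itself, which gives the isomorphism on $H^1_0(\Omega)$. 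I expect the only genuine technical point to be the justification of the product and quotient rules for the merely $W^{1,3}$ factor $\varepsilon$ together with the membership $1/\varepsilon\in W^{1,3}$; the $L^6$--$L^3$ pairing is exactly what forces the hypotheses $\varepsilon\in W^{1,3}(\Omega)$ and $n\le 3$, while the $H^1_0$ statement is then soft.
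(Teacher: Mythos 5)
Your proposal is correct. Note that the paper does not prove Theorem~\ref{s3thm1} at all: it simply recalls the statement from the reference \cite{BernardiLaval}, so there is no in-paper argument to compare against. Your direct proof is the standard one and delivers exactly the stated constants: the Leibniz rule $\nabla(\varepsilon u)=\varepsilon\nabla u+u\nabla\varepsilon$ combined with H\"older's inequality in the $L^6$--$L^3$ pairing and the embedding $H^1(\Omega)\hookrightarrow L^6(\Omega)$ (valid since $n\le 3$) gives the first bound, while the observation that \eqref{A1} forces $1/\varepsilon\in W^{1,3}(\Omega)\cap L^\infty(\Omega)$ with $\|1/\varepsilon\|_{0,\infty}\le\varepsilon_0^{-1}$ and $|1/\varepsilon|_{1,3}\le\varepsilon_0^{-2}|\varepsilon|_{1,3}$ reduces the second bound to the first and shows the two multiplications are mutually inverse bounded maps. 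The two technical points you flag are indeed the only ones needing care, and both are routine: the product rule for a $W^{1,3}\cap L^\infty$ factor against an $H^1$ function follows by mollifying $\varepsilon$ (the $L^\infty$ bound is preserved, $u\nabla\varepsilon_k\to u\nabla\varepsilon$ in $L^2$ by the $L^6$--$L^3$ pairing), and the chain rule for $1/\varepsilon$ holds because $t\mapsto 1/t$ is $C^1$ with bounded derivative on $[\varepsilon_0,\varepsilon_1]$. Your density argument for the $H^1_0$ invariance (compactly supported $H^1$ functions lie in $H^1_0$, then pass to the limit using the boundedness already proved) is also sound, so the proposal is a complete, self-contained substitute for the cited result.
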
 
%\begin{proof}
%See \cite{BernardiLaval}.
%\end{proof}
In the following the closed subspace of $\boldsymbol H^1_0(\Omega)$ defined by
\begin{equation*}
\boldsymbol{W}=\{\boldsymbol w\in\boldsymbol H^1_0(\Omega):\quad b(\boldsymbol w,q)=0\quad\forall\; q\in L^2_0(\Omega)\}.
\end{equation*}
will be employed. Next, we establish and prove some properties of trilinear form $n(\cdot,\cdot,\cdot)$ and nonlinear form $d(\cdot;\cdot,\cdot)$.
\begin{lemma}\label{s3lem2}
Let $\boldsymbol u, \boldsymbol v\in\boldsymbol H^1(\Omega)$ and $\boldsymbol w\in\boldsymbol H^1(\Omega)$ with $\text{div}\,(\varepsilon\boldsymbol w)=0$ and $\boldsymbol w\cdot\boldsymbol n\arrowvert_\Gamma=0$. Then we have  
\begin{equation}\label{s3eq6}
n(\boldsymbol w,\boldsymbol u,\boldsymbol v)=-n(\boldsymbol w,\boldsymbol v,\boldsymbol u)\,.
\end{equation}
Furthermore, the trilinear form $n(\cdot,\cdot,\cdot)$ and the nonlinear form
$d(\cdot;\cdot,\cdot)$ are continuous, i.e.
\begin{equation}\label{s3eq7}
|n(\boldsymbol u,\boldsymbol v,\boldsymbol w)|\le C_\varepsilon\,\|\boldsymbol u\|_1 \|\boldsymbol v\|_1 \|\boldsymbol w\|_1\quad\forall\;\boldsymbol u,\boldsymbol v,\boldsymbol w\in \boldsymbol H^1(\Omega)\,,
\end{equation}
\begin{equation}\label{s3eq7b}
|d(\boldsymbol u,\boldsymbol v,\boldsymbol w)|\le C_\varepsilon\, \|\boldsymbol u\|_1 \|\boldsymbol v\|_1 \|\boldsymbol w\|_1\quad\forall\;\boldsymbol u,\boldsymbol v,\boldsymbol w\in \boldsymbol H^1(\Omega)\,,
\end{equation} 
and for $\boldsymbol{u}\in\boldsymbol{W}$ and for a sequence $\boldsymbol u^k\in\boldsymbol{W}$ with $\lim\limits_{k\to\infty}\|\boldsymbol u^k-\boldsymbol{u}\|_0=0$, we have also
\begin{equation}\label{s3eq8}
\lim\limits_{k\to\infty}n(\boldsymbol u^k,\boldsymbol u^k,\boldsymbol v)=n(\boldsymbol u,\boldsymbol u,\boldsymbol v)\quad\forall\;\boldsymbol v\in\boldsymbol{W}.
\end{equation}
\end{lemma}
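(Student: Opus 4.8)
The plan is to establish the four assertions of Lemma~\ref{s3lem2} in sequence, using integration by parts for the skew-symmetry, Sobolev embeddings and H\"older's inequality for the continuity bounds, and compactness of the embedding $\boldsymbol H^1(\Omega)\hookrightarrow\boldsymbol L^p(\Omega)$ for the limit relation.

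First, for the skew-symmetry \eqref{s3eq6}: I would expand $n(\boldsymbol w,\boldsymbol u,\boldsymbol v)=\int_\Omega(\varepsilon w_j\,\partial_j u_i)\,v_i$ and integrate by parts in the $x_j$ variable, moving the derivative off $u_i$. This produces $-\int_\Omega u_i\,\partial_j(\varepsilon w_j\,v_i)$ together with a boundary term $\int_\Gamma \varepsilon(\boldsymbol w\cdot\boldsymbol n)\,u_i v_i\,ds$, which vanishes because $\boldsymbol w\cdot\boldsymbol n|_\Gamma=0$. Expanding the interior term by the product rule gives $-\int_\Omega u_i v_i\,\partial_j(\varepsilon w_j)-\int_\Omega \varepsilon w_j\,u_i\,\partial_j v_i = -n(\boldsymbol w,\boldsymbol v,\boldsymbol u)$, where the first piece drops out precisely because $\mathrm{div}(\varepsilon\boldsymbol w)=0$. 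One should note this formal computation requires a density argument: the identity is first verified for smooth $\boldsymbol u,\boldsymbol v$ and then extended to $\boldsymbol H^1(\Omega)$ by continuity, which in turn relies on \eqref{s3eq7}.

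For the continuity estimates \eqref{s3eq7} and \eqref{s3eq7b}, the key tool is the Sobolev embedding $\boldsymbol H^1(\Omega)\hookrightarrow\boldsymbol L^4(\Omega)$ (valid for $n=2,3$) and $\boldsymbol H^1(\Omega)\hookrightarrow\boldsymbol L^6(\Omega)$ for $n=3$. For $n$, I would bound $|n(\boldsymbol u,\boldsymbol v,\boldsymbol w)|\le\|\varepsilon\|_{0,\infty}\|\boldsymbol u\|_{0,4}\|\nabla\boldsymbol v\|_0\|\boldsymbol w\|_{0,4}\le C\,\varepsilon_1\,\|\boldsymbol u\|_1\|\boldsymbol v\|_1\|\boldsymbol w\|_1$ using H\"older with exponents $4,2,4$; here $C_\varepsilon$ absorbs $\varepsilon_1$ and the embedding constant. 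For $d$, since $\beta=1.75\kappa$ with $\kappa=(1-\varepsilon)/\varepsilon$ bounded by $(1-\varepsilon_0)/\varepsilon_0$ under \eqref{A1}, one has $\beta\in L^\infty(\Omega)$, so $|d(\boldsymbol u,\boldsymbol v,\boldsymbol w)|\le\|\beta\|_{0,\infty}\|\,|\boldsymbol u|\,\|_{0,4}\|\boldsymbol v\|_{0,4}\|\boldsymbol w\|_{0,2}\le C_\varepsilon\|\boldsymbol u\|_1\|\boldsymbol v\|_1\|\boldsymbol w\|_1$, again by H\"older and Sobolev.

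For the convergence \eqref{s3eq8}, the main obstacle is handling the nonlinearity when we only have strong $\boldsymbol L^2$ convergence, not $\boldsymbol H^1$ convergence, of $\boldsymbol u^k$. I would use the skew-symmetry \eqref{s3eq6}, which applies on $\boldsymbol W$, to rewrite $n(\boldsymbol u^k,\boldsymbol u^k,\boldsymbol v)=-n(\boldsymbol u^k,\boldsymbol v,\boldsymbol u^k)$ and likewise $n(\boldsymbol u,\boldsymbol u,\boldsymbol v)=-n(\boldsymbol u,\boldsymbol v,\boldsymbol u)$; this transfers the derivative onto the fixed test function $\boldsymbol v$. Then the difference is $-\int_\Omega\varepsilon\bigl((\boldsymbol u^k-\boldsymbol u)\cdot\nabla\bigr)\boldsymbol v\cdot\boldsymbol u^k-\int_\Omega\varepsilon(\boldsymbol u\cdot\nabla)\boldsymbol v\cdot(\boldsymbol u^k-\boldsymbol u)$; since $\{\boldsymbol u^k\}$ is bounded in $\boldsymbol H^1$ (hence in $\boldsymbol L^4$) — which may require that assumption to be added or follows from the context in which the lemma is applied — and $\boldsymbol u^k\to\boldsymbol u$ in $\boldsymbol L^2$, while $\nabla\boldsymbol v\in\boldsymbol L^2$, one estimates each term by H\"older with exponents $2,2,\infty$ after noting $\nabla\boldsymbol v\,\boldsymbol u^k$ or interpolating; more carefully, writing the products with exponents $4,4,2$ and using $\|\boldsymbol u^k-\boldsymbol u\|_{0,4}\to 0$ by interpolation between the $\boldsymbol L^2$-convergence and the $\boldsymbol H^1$-boundedness. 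Passing to the limit then gives the claim. I expect this last step to be the delicate one, since it is where the structure of $\boldsymbol W$ (through skew-symmetry) and a compactness/boundedness input are both essential.
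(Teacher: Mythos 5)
Your handling of the skew--symmetry \eqref{s3eq6} and of the continuity bounds \eqref{s3eq7}, \eqref{s3eq7b} coincides with the paper's proof: integration by parts with the boundary term killed by $\boldsymbol w\cdot\boldsymbol n\arrowvert_\Gamma=0$ and the extra volume term by $\mathrm{div}(\varepsilon\boldsymbol w)=0$, completed by a density argument, and then H\"older together with the embedding $H^1(\Omega)\hookrightarrow L^4(\Omega)$ and the boundedness of $\beta$ under \eqref{A1}. The only real difference is in the limit passage \eqref{s3eq8}. Both you and the paper first use skew--symmetry to move the derivative onto the fixed test function $\boldsymbol v$; but where you estimate $n(\boldsymbol u^k-\boldsymbol u,\boldsymbol v,\boldsymbol u^k)+n(\boldsymbol u,\boldsymbol v,\boldsymbol u^k-\boldsymbol u)$ by H\"older with exponents $4,2,4$ and get $\|\boldsymbol u^k-\boldsymbol u\|_{0,4}\to 0$ by interpolating the $L^2$-convergence against an $H^1$-bound, the paper instead observes that $\|u_i^ku_j^k-u_iu_j\|_{0,1}\to 0$ (which follows from $L^2$-convergence alone) and pairs these products with $\varepsilon\,\partial_j v_i$, asserting the latter to be in $L^\infty(\Omega)$. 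Your explicit caveat --- that your route needs $(\boldsymbol u^k)$ bounded in $\boldsymbol H^1$, which is not among the stated hypotheses --- actually points at a looseness in the lemma itself rather than at a defect peculiar to your argument: for $\boldsymbol v\in\boldsymbol W\subset\boldsymbol H^1_0(\Omega)$ one only has $\varepsilon\,\partial_j v_i\in L^2(\Omega)$, not $L^\infty$, so the paper's pairing is not justified as written either without, say, first taking $\boldsymbol v$ smooth or adding a boundedness assumption. In the two places where \eqref{s3eq8} is used (the continuity of $G$ in Lemma \ref{s3lem4}, where $\boldsymbol u^k\to\boldsymbol u$ in $\boldsymbol H^1$, and the Galerkin limit in Theorem \ref{s3thm6}, where $|\boldsymbol u^m|_1\le r_0$), the $H^1$-bound you require is available, so your version closes there and is, if anything, the more careful estimation of the same identity; to prove the lemma exactly as stated you would either add that hypothesis, as you suggest, or restrict to smooth $\boldsymbol v$ and argue by density.
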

\begin{proof}
We follow the proof of \cite[Lemma 2.1, \S 2, Chapter IV]{giro} and adapt it to the trilinear form 
$$n(\boldsymbol w,\boldsymbol u,\boldsymbol v)=\bigl((\varepsilon \boldsymbol
w\cdot\nabla)\boldsymbol u,\boldsymbol v\bigr)=\sum\limits_{i,j=1}^n\bigl(\varepsilon
w_j\partial_ju_i,v_i\bigr)\,,$$ which has the weighting factor $\varepsilon$. Hereby, symbols with subscripts denote components of bold faced vectors, e.g. $\boldsymbol{u}=(u_i)_{i=1,\ldots,n}$. Let $\boldsymbol{u}\in\boldsymbol{H}^1$, $\boldsymbol{v}\in\boldsymbol{D}(\Omega)$ and $\boldsymbol w\in\boldsymbol{W}$. 
Integrating by parts and employing density argument, we obtain immediately \eqref{s3eq6}
\begin{equation*}
\begin{split}
&\sum\limits_{i,j=1}^n\bigl(\varepsilon w_j\partial_j u_i,v_i\bigr)
=-\sum\limits_{i,j=1}^n\bigl(\partial_j\left(\varepsilon w_jv_i\right),u_i \bigr)+\sum\limits_{i,j=1}^n\langle\varepsilon w_j n_j u_i,v_i\rangle\\
&=-\sum\limits_{i,j=1}^n\bigl(\varepsilon w_j\partial_jv_i,u_i\bigr)-\bigl(\text{div}\,(\varepsilon\boldsymbol w)\boldsymbol u,\boldsymbol v\bigr)
+\bigl\langle (\varepsilon\boldsymbol{w}\cdot\boldsymbol{n})\boldsymbol{u},\boldsymbol{v}\bigr\rangle\\
&=-n(\boldsymbol w,\boldsymbol v,\boldsymbol u).
\end{split}
\end{equation*}
From Sobolev embedding $H^1(\Omega)\hookrightarrow L^4(\Omega)$ (see \cite{Adams}) and H\"older inequality follows
\begin{equation*}
\left|\bigl(\varepsilon w_j\partial_ju_i,v_i\bigr)\right|\le |\varepsilon|_{0,\infty}\,\|w_j\|_{0,4}\,\|\partial_ju_i\|_0\,\|v_i\|_{0,4}
\le C\,|\varepsilon|_{0,\infty}\,\| w_j\|_1\, |u_i|_1\,\|v_i\|_1\,,
\end{equation*} 
and consequently the proof of \eqref{s3eq7} is completed.
Since $\lim\limits_{k\rightarrow\infty}\|u_{i}^ku_{j}^k-u_iu_j\|_{0,1}=0$ and $\displaystyle\varepsilon\partial_jv_i\in L^\infty(\Omega)$, the continuity estimate \eqref{s3eq7} implies
\begin{equation*}
\begin{split}
\lim\limits_{k\rightarrow\infty} n(\boldsymbol u^k,\boldsymbol u^k,\boldsymbol v)&=-\lim\limits_{k\rightarrow\infty} n(\boldsymbol u^k,\boldsymbol v,\boldsymbol u^k)=-\lim\limits_{k\to\infty}\sum\limits_{i,j=1}^n\bigl(\varepsilon u_j^k\,\partial_jv_i^k,u_i^k\bigr)\\
&=-\sum\limits_{i,j=1}^n\bigl(\varepsilon u_j\partial_j v_i,u_i\bigr)=-n(\boldsymbol u,\boldsymbol v,\boldsymbol u)=n(\boldsymbol u,\boldsymbol u,\boldsymbol v)\,.
\end{split}
\end{equation*}
The continuity of $d(\cdot;\cdot,\cdot)$ follows from H\"older inequality and Sobolev embedding $H^1(\Omega)\hookrightarrow L^4(\Omega)$ (see \cite{Adams})
\begin{equation*}
|d(\boldsymbol u;\boldsymbol v,\boldsymbol w)|\le |\beta|_\infty\, \|\boldsymbol u\|_{0,4}\, \|\boldsymbol v\|_{0,4}\, \|\boldsymbol w\|_0\le C_\varepsilon  \|\boldsymbol u\|_1\, \|\boldsymbol v\|_1\, \|\boldsymbol w\|_1\,.
\end{equation*} 
\end{proof}
In the next stage we consider the  difficulties caused by prescribing the inhomogeneous Dirichlet
boundary condition. Analogous difficulties are already encountered in the analysis of
Navier--Stokes problem. We will carry out the study of three dimensional case. The extension in
two dimensions can be constructed analogously. Since $\boldsymbol g\in\boldsymbol{H}^{1/2}(\Gamma)$, we can extend $\boldsymbol g$ inside of $\Omega$ in the form of 
$$\boldsymbol g=\varepsilon^{-1}\,\textrm{curl}\,\boldsymbol h$$ 
with some $\boldsymbol h\in \boldsymbol{H}^2(\Omega)$. The operator $\textrm{curl}$ is
defined then as 
$$
\textrm{curl}\, \boldsymbol{h}=(\partial_2 h_3-\partial_3 h_2,\, \partial_3
h_1-\partial_1 h_3,\, \partial_1 h_2-\partial_2 h_1)\,.
$$
We note that in the two dimensional
case the vector potential $\boldsymbol{h}\in \boldsymbol{H}^2(\Omega)$ can be replaced
by a scalar function $h\in
H^2(\Omega)$ and the operator $\textrm{curl}$ is then redefined as $\textrm{curl}\, h=(\partial_2
h, -\partial_1 h)$. Our aim is to adapt the extension of Hopf (see \cite{Hopf}) to our
model. We recall that for any parameter $\mu>0$ there exists a scalar  function $\varphi_\mu\in C^2(\bar{\Omega})$ such that
\begin{equation}\label{Ex}
\left.\begin{split}
&\hspace{-0.5cm}\bullet\quad\varphi_\mu=1~\text{in some neighborhood of
$\Gamma$ (depending on $\mu$)}\,,\\[2ex]
&\hspace{-0.5cm}\bullet\quad\varphi_\mu(\boldsymbol{x})=0~\text{if
$d_\Gamma(\boldsymbol{x})\ge 2\exp{(-1/\mu)}$\,, where $d_\Gamma(\boldsymbol
x):=\inf\limits_{\boldsymbol
y\in\Gamma}|\boldsymbol{x}-\boldsymbol{y}|$}\\[-0.5ex] 
&\hspace{0.5cm}\text{denotes the distance of $\boldsymbol x$ to
$\Gamma$}\,,\\[2ex]
%=\textrm{dist}(\boldsymbol x,\Gamma)$,
&\hspace{-0.5cm}\bullet\quad
|\partial_j\varphi_\mu(\boldsymbol{x})|\le\mu/d_\Gamma(\boldsymbol{x})~~
\text{if~~$d_\Gamma(\boldsymbol x)< 2\exp{(-1/\mu)}$\,, $j=1,\ldots, n\,.$}
\end{split}\;\right\}\tag{\text{Ex}}
\end{equation}
For the construction of $\varphi_\mu$ see also \cite[Lemma 2.4,
\S 2, Chapter IV]{giro}.\\[1.5ex]
Let us define 
\begin{equation}\label{s3extension}
\boldsymbol g_\mu:=\varepsilon^{-1}\,\textrm{curl}\,(\varphi_\mu\boldsymbol h)\,. 
\end{equation}
In the following lemma we establish bounds which are crucial for proving existence of velocity.
\begin{lemma}\label{s3lem3}
The function $\boldsymbol g_\mu$ satisfies the following conditions
\begin{equation}\label{s3eq9}
\textrm{div}\,(\varepsilon\boldsymbol g_\mu) =0,\quad \boldsymbol
g_\mu\arrowvert_{\Gamma}=\boldsymbol g\qquad\forall\,\mu>0\,,
\end{equation}   
and for any $\delta>0$ there exists sufficiently small $\mu>0$ such that
\begin{align}
\label{s3eq10} |d(\boldsymbol u+\boldsymbol g_\mu;\boldsymbol
g_\mu,\boldsymbol u)| & \le \delta\,\|\beta\|_{0,\infty}\,|\boldsymbol u|_1\bigl(|\boldsymbol u|_1+\|\boldsymbol g_\mu\|_0\bigr)\qquad\forall\;\boldsymbol u\in\boldsymbol{X}_0\,,\\
\label{s3eq11} |n(\boldsymbol u,\boldsymbol g_\mu,\boldsymbol u)| &\le \delta\,|\boldsymbol u|_1^2\qquad\forall\;\boldsymbol u\in\boldsymbol{W}\,. 
\end{align} 
\end{lemma}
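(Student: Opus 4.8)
The plan is to follow the classical Hopf argument for the Navier--Stokes equations (as in \cite[\S 2, Chapter IV]{giro}), adapting it to the weighted forms that appear here. The key point is that the cut-off function $\varphi_\mu$ concentrates $\boldsymbol{g}_\mu$ in a thin tubular neighbourhood of $\Gamma$ whose width shrinks like $2\exp(-1/\mu)$, so that any term involving $\boldsymbol{g}_\mu$ multiplied by a function vanishing on $\Gamma$ can be made small by a Hardy-type inequality.

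\textbf{Step 1: The algebraic identities \eqref{s3eq9}.} Since $\textrm{div}\,\textrm{curl}\equiv 0$, we have $\textrm{div}(\varepsilon\boldsymbol{g}_\mu)=\textrm{div}\,\textrm{curl}(\varphi_\mu\boldsymbol{h})=0$ identically. On $\Gamma$ the function $\varphi_\mu\equiv 1$ in a neighbourhood of the boundary, hence $\textrm{curl}(\varphi_\mu\boldsymbol{h})=\textrm{curl}\,\boldsymbol{h}$ there and $\boldsymbol{g}_\mu|_\Gamma=\varepsilon^{-1}\textrm{curl}\,\boldsymbol{h}|_\Gamma=\boldsymbol{g}$. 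This is immediate and does not depend on $\mu$.

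\textbf{Step 2: Pointwise bound on $\boldsymbol{g}_\mu$ and Hardy's inequality.} Expanding $\textrm{curl}(\varphi_\mu\boldsymbol{h})=\varphi_\mu\,\textrm{curl}\,\boldsymbol{h}+\nabla\varphi_\mu\times\boldsymbol{h}$, the first term is supported in the strip $\{d_\Gamma(\boldsymbol{x})<2\exp(-1/\mu)\}$ and is bounded in $\boldsymbol{L}^2$ (in fact $\boldsymbol{L}^\infty$, since $\boldsymbol{h}\in\boldsymbol{H}^2\hookrightarrow\boldsymbol{C}(\bar\Omega)$ for $n=3$), so its norm tends to $0$ as $\mu\to0$ by dominated convergence. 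For the second term we use $|\nabla\varphi_\mu|\le\mu/d_\Gamma$ from \eqref{Ex}, giving $|\nabla\varphi_\mu\times\boldsymbol{h}|\le C\mu\,|\boldsymbol{h}|/d_\Gamma$ on the strip. Since $\boldsymbol{h}\in\boldsymbol{H}^2$ vanishes on $\Gamma$ (it is chosen so that $\varepsilon^{-1}\textrm{curl}\,\boldsymbol h=\boldsymbol g$; more precisely one uses that $\boldsymbol h$ may be taken with $\boldsymbol h|_\Gamma=0$), Hardy's inequality $\|\boldsymbol{h}/d_\Gamma\|_0\le C\|\boldsymbol{h}\|_1$ controls this, but the crucial refinement is that $\int_{d_\Gamma<2e^{-1/\mu}}|\boldsymbol h|^2/d_\Gamma^2\,dx\to 0$ as $\mu\to0$ since the integrand is in $L^1$. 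Combining with assumption \eqref{A1}, $\varepsilon^{-1}\le\varepsilon_0^{-1}$, we obtain that $\|\boldsymbol{g}_\mu\|_0$ is bounded uniformly in $\mu$ and, more importantly, that the portion of $\boldsymbol g_\mu$ tested against a function vanishing on $\Gamma$ is small.

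\textbf{Step 3: The estimates \eqref{s3eq10} and \eqref{s3eq11}.} For \eqref{s3eq11}, write $n(\boldsymbol u,\boldsymbol g_\mu,\boldsymbol u)=\sum_{i,j}(\varepsilon u_j\,\partial_j(g_\mu)_i,u_i)$ and bound $|n(\boldsymbol u,\boldsymbol g_\mu,\boldsymbol u)|\le\varepsilon_1\|\nabla\boldsymbol g_\mu\|$ against $\|\boldsymbol u\|_{0,4}^2$; then split $\boldsymbol u=d_\Gamma\cdot(\boldsymbol u/d_\Gamma)$, use Hölder with the strip, the Sobolev embedding $\boldsymbol H^1\hookrightarrow\boldsymbol L^4$ to control $\|\boldsymbol u/d_\Gamma\|$ by $|\boldsymbol u|_1$ via Hardy (valid since $\boldsymbol u\in\boldsymbol X_0$), and the fact that $\|\nabla\boldsymbol g_\mu\|_{L^2(\text{strip})}$ — the only dangerous factor, growing like $\mu/d_\Gamma$ — pairs with $d_\Gamma$ from $\boldsymbol u$ to produce a factor $\mu$ times a uniformly bounded quantity. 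Choosing $\mu$ small makes this $\le\delta|\boldsymbol u|_1^2$. For \eqref{s3eq10}, $d(\boldsymbol u+\boldsymbol g_\mu;\boldsymbol g_\mu,\boldsymbol u)=(\beta|\boldsymbol u+\boldsymbol g_\mu|\boldsymbol g_\mu,\boldsymbol u)$; bound $|\boldsymbol u+\boldsymbol g_\mu|\le|\boldsymbol u|+|\boldsymbol g_\mu|$ pointwise, estimate $|d|\le\|\beta\|_{0,\infty}\big(\|\,|\boldsymbol u|\,\boldsymbol g_\mu\|_0+\|\,|\boldsymbol g_\mu|\boldsymbol g_\mu\|_0\big)\|\boldsymbol u\|_0$, and in each factor extract $\boldsymbol g_\mu$ on the strip, writing the partner $\boldsymbol u$ (or $\boldsymbol g_\mu$, which also vanishes on $\Gamma$) as $d_\Gamma\cdot(\cdot/d_\Gamma)$; the small-support/Hardy mechanism again yields the prefactor $\delta$, and keeping track of which factor carries $|\boldsymbol u|_1$ versus $\|\boldsymbol g_\mu\|_0$ reproduces the stated right-hand side.

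\textbf{Main obstacle.} The routine continuity bounds are easy; the real work is the quantitative smallness, i.e. showing the relevant integrals over the shrinking strip $\{d_\Gamma<2\exp(-1/\mu)\}$ genuinely vanish as $\mu\to0$. This requires care in distributing the weight $d_\Gamma$ between $\boldsymbol g_\mu$ (which supplies the bad factor $\mu/d_\Gamma$) and the test/trial functions in $\boldsymbol X_0$ or $\boldsymbol W$ (which supply a compensating $d_\Gamma$ via Hardy's inequality), while simultaneously respecting the $L^4$--$L^4$--$L^2$ or $L^4$--$L^2$ Hölder structure forced by the cubic/quadratic nonlinearities and the limited regularity $\varepsilon\in W^{1,3}\cap L^\infty$, $\boldsymbol h\in\boldsymbol H^2$. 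Once the bookkeeping is set up correctly, the choice of $\mu=\mu(\delta)$ is dictated by dominated convergence.
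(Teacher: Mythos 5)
Your Step~1 and the overall Hopf-cutoff philosophy match the paper, but two of your key quantitative mechanisms fail. First, you cannot take $\boldsymbol h|_\Gamma=\boldsymbol 0$: if $\boldsymbol h$ vanished on $\Gamma$ its tangential derivatives would vanish there, so $\textrm{curl}\,\boldsymbol h\cdot\boldsymbol n|_\Gamma=0$ and hence $\boldsymbol g\cdot\boldsymbol n\equiv 0$ on $\Gamma$, which is not assumed (only the mean compatibility \eqref{zerocomp} holds; the reactor example has genuine in/outflow). Consequently $|\boldsymbol h|/d_\Gamma$ is not square integrable near $\Gamma$ and your claim that $\int_{\Omega_\mu}|\boldsymbol h|^2/d_\Gamma^2\,d\boldsymbol x\to 0$ is false. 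The $1/d_\Gamma$ singularity coming from $|\nabla\varphi_\mu|\le\mu/d_\Gamma$ must instead be paired, via Hardy's inequality, with the function that genuinely vanishes on $\Gamma$, namely $\boldsymbol u\in\boldsymbol X_0$ (resp.\ $\boldsymbol W$). This is exactly what the paper does: it only uses $\boldsymbol h\in\boldsymbol L^\infty$ to get $|\boldsymbol g_\mu|\le C\{\mu/d_\Gamma+|\nabla\boldsymbol h|\}$, and then proves $\|\boldsymbol u\cdot\boldsymbol g_\mu\|_{0,\Omega_\mu}\le C\lambda(\mu)\|\boldsymbol u\|_1$ with $\lambda(\mu)=\max\{\mu,\||\nabla\boldsymbol h|\|_{0,4,\Omega_\mu}\}\to 0$. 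The same misconception undermines your treatment of the quadratic-in-$\boldsymbol g_\mu$ piece of \eqref{s3eq10}: after you pull out $\|\boldsymbol u\|_0$ by Cauchy--Schwarz you are left with $\||\boldsymbol g_\mu|^2\|_0=\|\boldsymbol g_\mu\|_{0,4}^2$, and your proposed rescue (``$\boldsymbol g_\mu$, which also vanishes on $\Gamma$'') is wrong, since $\boldsymbol g_\mu|_\Gamma=\boldsymbol g\neq\boldsymbol 0$; nor is $\|\boldsymbol g_\mu\|_{0,4}$ small (or even uniformly bounded) as $\mu\to 0$. The paper's grouping $\|\beta\|_{0,\infty}\bigl(\|\boldsymbol u\|_0+\|\boldsymbol g_\mu\|_0\bigr)\|\boldsymbol u\cdot\boldsymbol g_\mu\|_{0,\Omega_\mu}$ keeps one factor of $\boldsymbol u$ inside the strip integral precisely so that Hardy applies.

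Second, for \eqref{s3eq11} you estimate $\nabla\boldsymbol g_\mu$ pointwise and claim it ``grows like $\mu/d_\Gamma$''. It does not: $\nabla\boldsymbol g_\mu$ contains $\nabla^2\varphi_\mu$ terms (and a $\nabla\varepsilon^{-1}$ term), and \eqref{Ex} gives no control on second derivatives of $\varphi_\mu$, so your ``only dangerous factor'' is in fact uncontrolled. This is why the classical argument --- and the paper --- never differentiates $\boldsymbol g_\mu$ in this term: the paper splits $n(\boldsymbol u,\boldsymbol g_\mu,\boldsymbol u)=\bigl((\boldsymbol u\cdot\nabla)\,\textrm{curl}(\varphi_\mu\boldsymbol h),\boldsymbol u\bigr)_{\Omega_\mu}-\bigl((\boldsymbol u\cdot\nabla\varepsilon)\boldsymbol g_\mu,\boldsymbol u\bigr)_{\Omega_\mu}$, invokes the Hopf estimate of Girault--Raviart for the first piece (where the derivative is moved off $\textrm{curl}(\varphi_\mu\boldsymbol h)$ by skew-symmetry, so that only the pointwise bound on $\textrm{curl}(\varphi_\mu\boldsymbol h)$ itself plus Hardy applied to $\boldsymbol u$ is needed), and uses $\varepsilon\in W^{1,3}$, the embedding $H^1\hookrightarrow L^6$ and \eqref{s3eq14} for the second. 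As written, your arguments for both \eqref{s3eq10} and \eqref{s3eq11} therefore have genuine gaps; the repair is to put the $d_\Gamma$ weight on $\boldsymbol u$ throughout and to eliminate $\nabla\boldsymbol g_\mu$ by an integration by parts (or by citing the Girault--Raviart lemma) before estimating.
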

\begin{proof}
The relations in \eqref{s3eq9} are obvious. We follow \cite{Kaloni} in order to show \eqref{s3eq10}. Since $\boldsymbol h\in\boldsymbol{H}^2(\Omega)$ Sobolev's embedding theorem implies $\boldsymbol h\in \boldsymbol{L}^\infty(\Omega)$, so we get according to the properties of $\varphi_\mu$ in \eqref{Ex} the following bound 
\begin{equation*}
|\boldsymbol g_\mu|\le C\,\varepsilon_0^{-1}\,\left\{|\nabla\boldsymbol
h|+\frac{\mu}{d_\Gamma(\boldsymbol{x})} |\boldsymbol h|\right\}
\le C\,\left\{\frac{\mu}{d_\Gamma(\boldsymbol{x})}+|\nabla\boldsymbol h|\right\}.\end{equation*}
Defining 
$$
\Omega_\mu:=\{\boldsymbol x\in\Omega:\;d_\Gamma(\boldsymbol{x})<2\exp(-1/\mu)\}
$$ 
we obtain from Cauchy-Schwarz and triangle inequalities 
\begin{equation}\label{s3eq12}
\begin{split}
|\bigl(\beta |\boldsymbol u+\boldsymbol g_\mu|, \boldsymbol g_\mu\cdot\boldsymbol u\bigr)|
&\le\,\|\beta\|_{0,\infty}\,\|\boldsymbol u\|_0\,\|\boldsymbol
u\cdot\boldsymbol g_\mu\|_{0,\Omega_\mu}\\
&\qquad+\|\beta\|_{0,\infty}\,\|\boldsymbol g_\mu\|_0\,\|\boldsymbol u\cdot\boldsymbol g_\mu\|_{0,\Omega_\mu}\,,
\end{split}
\end{equation}
\begin{equation*}
\begin{split}
&\|\boldsymbol u\cdot\boldsymbol g_\mu\|_{0,\Omega_\mu}^2
\le \int\limits_{\Omega_\mu}|\boldsymbol u|^2|\boldsymbol g_\mu|^2 d\boldsymbol x\\ 
&\le C\int\limits_{\Omega_\mu}|\boldsymbol
u|^2\biggl\{\bigl(\mu/d_\Gamma(\boldsymbol{x})\bigr)^2+2\mu/d_\Gamma(\boldsymbol{x})\,|\nabla\boldsymbol h|+|\nabla\boldsymbol h|^2\biggr\}d\boldsymbol x\\
&\le C\left \{\mu^2 \|\boldsymbol u/d_\Gamma\|_{0,\Omega_\mu}^2+2\mu
\|\boldsymbol u/d_\Gamma\|_{0,\Omega_\mu}\,\|\boldsymbol{u}\|_{0,4,\Omega_\mu}\,\bigl\||\nabla\boldsymbol h|\bigr\|_{0,4,\Omega_\mu}
+\|\boldsymbol u\|_{0,4,\Omega_\mu}^2\bigl\||\nabla\boldsymbol h|\bigr\|_{0,4,\Omega_\mu}^2\right\}\\
&\le C\left \{\mu \|\boldsymbol u/d_\Gamma\|_{0,\Omega_\mu}+\|\boldsymbol u\|_{0,4}\bigl\||\nabla\boldsymbol h|\bigr\|_{0,4,\Omega_\mu}\right\}^2,
\end{split}
\end{equation*}
and consequently
\begin{equation}\label{s3eq13}
\|\boldsymbol u\cdot\boldsymbol g_\mu\|_{0,\Omega_\mu}\le C\left\{\mu
\|\boldsymbol u/d_\Gamma\|_{0,\Omega_\mu}+\|\boldsymbol u\|_{0,4}\bigl\||\nabla\boldsymbol h|\bigr\|_{0,4,\Omega_\mu}\right\}\,.
\end{equation}
Applying Hardy inequality (see \cite{Adams})
$$\|v/d_\Gamma\|_0\le C|v|_1\quad\forall\;v\in H_0^1(\Omega)$$
and using Sobolev embedding $H^1(\Omega)\hookrightarrow L^4(\Omega)$, estimate  (\ref{s3eq13}) becomes 
\begin{equation}\label{s3eq14}
\|\boldsymbol u\cdot\boldsymbol g_\mu\|_{0,\Omega_\mu}\le C\lambda(\mu)\|\boldsymbol u\|_1, 
\end{equation}
where 
$$\lambda(\mu):=\max\bigl\{\mu,\bigl\||\nabla\boldsymbol
h|\bigr\|_{0,4,\Omega_\mu}\bigr\}\,.$$ 
From \eqref{s3eq12}, \eqref{s3eq14}, Poincar\'e inequality and  from  the fact that $\lim\limits_{\mu\to 0}\lambda(\mu) = 0$  we conclude that for any $\delta>0$ we can choose sufficiently small $\mu>0$ such that  
\begin{equation*}
|(\beta\,|\boldsymbol u+\boldsymbol g_\mu|\boldsymbol g_\mu,\boldsymbol u)| 
%\le\delta\,|\beta|_{0,\infty}\,(|\boldsymbol u|_1^2+\delta|\boldsymbol u|_1\|\boldsymbol g_\mu\|_0)
\le\delta\,\|\beta\|_{0,\infty}\,|\boldsymbol{u}|_1\bigl(|\boldsymbol{u}|_1+\|\boldsymbol{g}_\mu\|_0\bigr)
\end{equation*}
holds. Therefore the proof of estimate \eqref{s3eq10} is completed. Now, we take a look at the trilinear convective term
\begin{equation*}
\begin{split}
n(\boldsymbol{u},\boldsymbol{g}_\mu,\boldsymbol{u})&=
\bigl((\varepsilon\boldsymbol{u}\cdot\nabla)\boldsymbol{g}_\mu, \boldsymbol{u}\bigr)_{\Omega_\mu}
=\biggl((\varepsilon\boldsymbol{u}\cdot\nabla)\left\{\varepsilon^{-1}\,\text{curl}\,(\varphi_\mu\boldsymbol{h})\right\}, \boldsymbol{u}\biggr)_{\Omega_\mu}\\
&=\biggl((\boldsymbol{u}\cdot\nabla)\left\{\text{curl}\,(\varphi_\mu\boldsymbol{h})\right\}, \boldsymbol{u}\biggr)_{\Omega_\mu}-\bigl((\boldsymbol{u}\cdot\nabla\varepsilon)\,\boldsymbol{g}_\mu,\boldsymbol{u}\bigr)_{\Omega_\mu}\,.
\end{split}
\end{equation*}
The first term of above difference becomes small due to
\cite[Lemma 2.3, \S 2, Chapter IV]{giro}, and it satisfies
\begin{equation}\label{s3eq15}
\left|\bigl((\boldsymbol{u}\cdot\nabla)\left\{\text{curl}\,(\varphi_\mu\boldsymbol{h})\right\}, \boldsymbol{u}\bigr)_{\Omega_\mu}\right|=\left|\bigl((\boldsymbol{u}\cdot\nabla)(\varepsilon\boldsymbol{g}_\mu), \boldsymbol{u}\bigr)_{\Omega_\mu}\right|\le \delta |\boldsymbol{u}|_1^2
\end{equation}
as long as $\mu>0$ is chosen sufficiently small. Using H\"older inequality, Sobolev embedding $H^1(\Omega)\hookrightarrow L^6(\Omega)$ yields
\begin{equation*}
\left|\bigl((\boldsymbol{u}\cdot\nabla\varepsilon)\,\boldsymbol{g}_\mu,\boldsymbol{u}\bigr)_{\Omega_\mu}\right|\le C\|\varepsilon\|_{1,3}\,\|\boldsymbol{g}_\mu\cdot\boldsymbol{u}\|_0\,\|\boldsymbol{u}\|_1\,,
\end{equation*}
which together with \eqref{s3eq14} implies for sufficiently small $\mu>0$ the bound
\begin{equation}\label{s3eq16}
\left|\bigl((\boldsymbol{u}\cdot\nabla\varepsilon)\,\boldsymbol{g}_\mu,\boldsymbol{u}\bigr)_{\Omega_\mu}\right|\le \delta |\boldsymbol{u}|_1^2\,.
\end{equation}
From \eqref{s3eq15} and \eqref{s3eq16} follows the desired estimate \eqref{s3eq11}.
\end{proof}
While the general framework for linear and non-symmetric saddle point problems can be found  in \cite{BernardiLaval},
our problem requires more attention due to its nonlinear character. Setting $\boldsymbol{w}:=\boldsymbol u-\boldsymbol g_\mu$, the weak formulation \eqref{s3eq5} is equivalent to the following problem\\[2ex]
\hspace*{0.4cm}Find $\displaystyle (\boldsymbol w,p)\in \boldsymbol{V}$ such that
\begin{gather}\label{s3eq17}
\begin{split}
A(\boldsymbol{w}+\boldsymbol{g}_\mu;\boldsymbol{w}+\boldsymbol{g}_\mu,\boldsymbol{v})-b(\boldsymbol{v},p)+b(\boldsymbol{w}+\boldsymbol{g}_\mu,q)=(\boldsymbol{f},\boldsymbol{v})\quad\forall\; (\boldsymbol{v},q)\in\boldsymbol{V}\,.
\end{split}
\end{gather}
Let us define the nonlinear mapping $G:\; \boldsymbol{W}\rightarrow \boldsymbol{W}$ with
\begin{equation}\label{s3eq18}
\begin{split}
\bigl[G(\boldsymbol w),\boldsymbol v\bigr]:=&a(\boldsymbol w+\boldsymbol g_\mu,\boldsymbol v)+c(\boldsymbol w+\boldsymbol g_\mu,\boldsymbol v)-(\boldsymbol f,\boldsymbol v)\\
&\;+n(\boldsymbol w+\boldsymbol g_\mu,\boldsymbol w+\boldsymbol g_\mu,\boldsymbol v)+d(\boldsymbol w+\boldsymbol g_\mu;\boldsymbol w+\boldsymbol g_\mu,\boldsymbol v)\,,
\end{split}
\end{equation}
whereby $[\cdot,\cdot]$ defines the inner product in $\boldsymbol{W}$ via
$[u,v]:=(\nabla u,\nabla v)$.   
Then, the variational problem (\ref{s3eq17}) reads in the space of $\varepsilon$-weighted divergence free functions $\boldsymbol{W}$ as follows\\[2ex]
\hspace*{0.4cm}Find $\displaystyle \boldsymbol w\in \boldsymbol{W}$ such that
\begin{equation}\label{s3eq19}
\bigl[G(\boldsymbol w),\boldsymbol v\bigr]=0\quad\forall\;\boldsymbol v\in\boldsymbol{W}.
\end{equation}
\subsection{Solvability of nonlinear saddle point problem}
We start our study of the nonlinear operator problem \eqref{s3eq19} with the following lemma.
\begin{lemma}\label{s3lem4}
The mapping $G$ defined in (\ref{s3eq18}) is continuous and there exists $r>0$ such that
\begin{equation}\label{s3eq20}
\bigl[G(\boldsymbol u),\boldsymbol u\bigr] >0\quad\forall\;\boldsymbol u\in\boldsymbol{W}\quad\textrm{with}\quad |\boldsymbol u|_1=r.
\end{equation}
\end{lemma}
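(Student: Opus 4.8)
The plan is to verify the two assertions of the lemma separately, relying on the continuity estimates and coercivity-type bounds already assembled. \emph{Continuity of $G$:} Each of the five terms defining $[G(\boldsymbol w),\boldsymbol v]$ in \eqref{s3eq18} is either a bounded bilinear form ($a$, $c$), a bounded linear functional ($(\boldsymbol f,\cdot)$), or one of the nonlinear forms $n(\cdot,\cdot,\boldsymbol v)$, $d(\cdot;\cdot,\boldsymbol v)$ whose continuity — indeed local Lipschitz behaviour — follows from \eqref{s3eq7} and \eqref{s3eq7b} in Lemma~\ref{s3lem2} together with the elementary bound $\bigl|\,|\boldsymbol w_1|-|\boldsymbol w_2|\,\bigr|\le|\boldsymbol w_1-\boldsymbol w_2|$ for the modulus appearing in $d$. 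Writing $G(\boldsymbol w_1)-G(\boldsymbol w_2)$ and expanding the quadratic dependence on the first two slots, one estimates $\|G(\boldsymbol w_1)-G(\boldsymbol w_2)\|_{\boldsymbol W}$ by a constant times $\|\boldsymbol w_1-\boldsymbol w_2\|_1$ times a polynomial in $\|\boldsymbol w_1\|_1$, $\|\boldsymbol w_2\|_1$, $\|\boldsymbol g_\mu\|_1$; this gives continuity (even on bounded sets, a stronger statement).

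\emph{The coercivity inequality \eqref{s3eq20}:} Fix $\boldsymbol u\in\boldsymbol W$ and compute $[G(\boldsymbol u),\boldsymbol u]$. By the Poincaré inequality on $\boldsymbol X_0$, $a(\boldsymbol u,\boldsymbol u)=\tfrac1{Re}(\varepsilon\nabla\boldsymbol u,\nabla\boldsymbol u)\ge\tfrac{\varepsilon_0}{Re}|\boldsymbol u|_1^2$, and $c(\boldsymbol u,\boldsymbol u)\ge0$ since $\alpha=150\kappa^2\ge0$. The remaining structural terms need care. For the convective part, since $\boldsymbol u\in\boldsymbol W$ (so $\mathrm{div}(\varepsilon\boldsymbol u)=0$ and $\boldsymbol u\cdot\boldsymbol n|_\Gamma=0$), the skew-symmetry \eqref{s3eq6} gives $n(\boldsymbol u,\boldsymbol u,\boldsymbol u)=0$ and $n(\boldsymbol u,\boldsymbol g_\mu,\boldsymbol u)$ is controlled by \eqref{s3eq11}, while the cross term $n(\boldsymbol u,\boldsymbol u,\boldsymbol g_\mu)=-n(\boldsymbol u,\boldsymbol g_\mu,\boldsymbol u)$ is the same quantity; the term $n(\boldsymbol g_\mu,\boldsymbol u,\boldsymbol u)$ must be handled either by skew-symmetry in the appropriate slots or by the continuity bound \eqref{s3eq7}, picking up a factor $\|\boldsymbol g_\mu\|_1$, and $n(\boldsymbol g_\mu,\boldsymbol g_\mu,\boldsymbol u)\le C_\varepsilon\|\boldsymbol g_\mu\|_1^2|\boldsymbol u|_1$ is lower-order in $\boldsymbol u$. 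For the Forchheimer part, $d(\boldsymbol u+\boldsymbol g_\mu;\boldsymbol u,\boldsymbol u)=(\beta|\boldsymbol u+\boldsymbol g_\mu|\,\boldsymbol u,\boldsymbol u)\ge0$ because $\beta=1.75\kappa\ge0$; the genuinely sign-indefinite piece is $d(\boldsymbol u+\boldsymbol g_\mu;\boldsymbol g_\mu,\boldsymbol u)$, which is exactly bounded by \eqref{s3eq10}.

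Collecting these, and choosing $\mu>0$ small enough (via Lemma~\ref{s3lem3}) that the $\delta$-multiplied terms absorb into the $\tfrac{\varepsilon_0}{Re}|\boldsymbol u|_1^2$ leading term with room to spare, one arrives at an inequality of the shape
\begin{equation*}
[G(\boldsymbol u),\boldsymbol u]\ \ge\ C_1|\boldsymbol u|_1^2 - C_2|\boldsymbol u|_1 - C_3
\end{equation*}
with $C_1>0$ and $C_2,C_3\ge0$ depending on $\|\boldsymbol f\|$, $\|\boldsymbol g_\mu\|$, $\varepsilon_0$, $Re$ but not on $\boldsymbol u$. The quadratic $C_1 t^2-C_2 t-C_3$ is eventually positive, so there is an $r>0$ with $C_1 r^2-C_2 r-C_3>0$, and then $[G(\boldsymbol u),\boldsymbol u]>0$ whenever $|\boldsymbol u|_1=r$, which is \eqref{s3eq20}.

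\emph{Main obstacle.} The delicate point is the bookkeeping of the convective cross terms: when $\boldsymbol u\in\boldsymbol W$ but $\boldsymbol g_\mu\notin\boldsymbol W$ (it is only $\varepsilon$-divergence-free, not in $\boldsymbol X_0$), the skew-symmetry \eqref{s3eq6} applies with $\boldsymbol w=\boldsymbol u$ in the first slot but not with $\boldsymbol w=\boldsymbol g_\mu$, so one must be careful which of $n(\boldsymbol u,\boldsymbol g_\mu,\boldsymbol u)$, $n(\boldsymbol g_\mu,\boldsymbol u,\boldsymbol u)$, $n(\boldsymbol u,\boldsymbol u,\boldsymbol g_\mu)$ are mutually equal and which genuinely need the smallness estimate \eqref{s3eq11} versus the plain continuity estimate \eqref{s3eq7}. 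Getting this accounting right — so that every term either has a good sign, is killed by skew-symmetry, is made $\delta$-small by shrinking $\mu$, or is lower-order in $|\boldsymbol u|_1$ — is where the proof has to be written carefully; everything else is a routine application of Poincaré, Cauchy–Schwarz, and the already-established lemmas.
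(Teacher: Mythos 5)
Your overall route is the same as the paper's: expand $[G(\boldsymbol u),\boldsymbol u]$, keep the coercive part $\tfrac{\varepsilon_0}{Re}|\boldsymbol u|_1^2$ together with the nonnegative Darcy and Forchheimer diagonal terms, kill the convective diagonal by skew-symmetry, make the dangerous terms $\delta$-small via Lemma~\ref{s3lem3} (estimates \eqref{s3eq10}, \eqref{s3eq11}), bound the remaining terms with Cauchy--Schwarz, \eqref{s3eq7} and Poincar\'e, and take $r$ beyond the positive root of the resulting quadratic; your termwise Lipschitz argument for the continuity of $G$ based on \eqref{s3eq7}, \eqref{s3eq7b} is in line with (indeed slightly cleaner than) the paper's sequential argument, which invokes \eqref{s3eq8}.

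The one place where your write-up is not yet a proof is exactly the point you flag but leave open: the term $n(\boldsymbol g_\mu,\boldsymbol u,\boldsymbol u)$. The alternative of estimating it by \eqref{s3eq7}, ``picking up a factor $\|\boldsymbol g_\mu\|_1$'', does not work: that bound is $C_\varepsilon\,\|\boldsymbol g_\mu\|_1\,|\boldsymbol u|_1^2$, i.e.\ quadratic in $|\boldsymbol u|_1$, and $\|\boldsymbol g_\mu\|_1$ is not small for small $\mu$ (on $\Gamma$ one has $\boldsymbol g_\mu=\boldsymbol g$, so $\|\boldsymbol g_\mu\|_1$ is bounded below by a multiple of $\|\boldsymbol g\|_{1/2,\Gamma}$); such a term could destroy the coefficient $\tfrac{\varepsilon_0}{Re}-\delta(1+\|\beta\|_{0,\infty})$ unless one adds a smallness assumption on the data, which Lemma~\ref{s3lem4} does not make. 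The correct accounting is that this term vanishes: in the integration-by-parts identity behind \eqref{s3eq6} the boundary contribution $\langle(\varepsilon\boldsymbol w\cdot\boldsymbol n)\boldsymbol u,\boldsymbol v\rangle$ already disappears when the last two arguments lie in $\boldsymbol H^1_0(\Omega)$, so $\textrm{div}\,(\varepsilon\boldsymbol g_\mu)=0$ from \eqref{s3eq9} alone yields $n(\boldsymbol g_\mu,\boldsymbol u,\boldsymbol u)=0$ (equivalently $n(\boldsymbol u+\boldsymbol g_\mu,\boldsymbol u,\boldsymbol u)=0$); the hypothesis $\boldsymbol w\cdot\boldsymbol n\arrowvert_\Gamma=0$ of \eqref{s3eq6} is not needed for this. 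This is what the paper uses implicitly in \eqref{s3eq21}, which retains only $n(\boldsymbol u,\boldsymbol g_\mu,\boldsymbol u)$ and $n(\boldsymbol g_\mu,\boldsymbol g_\mu,\boldsymbol u)$. Also note that the term $n(\boldsymbol u,\boldsymbol u,\boldsymbol g_\mu)$ you mention never occurs: the third slot in $[G(\boldsymbol u),\boldsymbol u]$ is always $\boldsymbol u$, so the expansion produces only $n(\boldsymbol u,\boldsymbol u,\boldsymbol u)$, $n(\boldsymbol u,\boldsymbol g_\mu,\boldsymbol u)$, $n(\boldsymbol g_\mu,\boldsymbol u,\boldsymbol u)$ and $n(\boldsymbol g_\mu,\boldsymbol g_\mu,\boldsymbol u)$. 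With that single correction your argument coincides with the paper's proof.
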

\begin{proof}
Let $(\boldsymbol u^k)_{k\in\N}$ be a sequence in $\boldsymbol{W}$ with $\lim\limits_{k\to\infty}\|\boldsymbol u^k-\boldsymbol u\|_1=0$. Then, applying Cauchy--Schwarz inequality and \eqref{s3eq11}, we obtain for any $\boldsymbol v\in\boldsymbol{W}$
\begin{equation*}
\begin{split}
&\left|\bigl[G(\boldsymbol u^k)-G(\boldsymbol u),\boldsymbol v\bigr]\right|\le \frac{1}{Re}\left|\bigl(\varepsilon\nabla(\boldsymbol u^k-\boldsymbol u),\nabla\boldsymbol v\bigr)\right|
+\frac{1}{Re}\left|\bigl(\alpha(\boldsymbol u^k-\boldsymbol u),\boldsymbol
v\bigr)\right|\\
&\quad +\left|\bigl(\beta|\boldsymbol u^k+\boldsymbol g_\mu|(\boldsymbol
u^k-\boldsymbol u),\boldsymbol v\bigr)\right|+\left|\bigl(\beta (|\boldsymbol
u^k+\boldsymbol g_\mu|-|\boldsymbol u+\boldsymbol g_\mu|)(\boldsymbol
u+\boldsymbol g_\mu),\boldsymbol v\bigr)\right|\\
&\quad+\left|n(\boldsymbol u^k,\boldsymbol u^k, \boldsymbol v)-n(\boldsymbol
u,\boldsymbol u,\boldsymbol v)\right|
+\left|n(\boldsymbol u^k-\boldsymbol u,\boldsymbol g_\mu,\boldsymbol
v)\right|+\left|n(\boldsymbol g_\mu, \boldsymbol u^k-\boldsymbol
u,,\boldsymbol v)\right|\\
&\le\frac{\varepsilon_1}{Re}|\boldsymbol u^k-\boldsymbol u|_1|\boldsymbol v|_1
+\frac{1}{Re}\|\alpha\|_{0,\infty}\|\boldsymbol u^k-\boldsymbol u\|_0\|\boldsymbol v\|_0\\
&\quad+\|\beta\|_{0,\infty}\|\boldsymbol u^k+\boldsymbol g_\mu\|_{0,4}\|\boldsymbol u^k-\boldsymbol u\|_0\|\boldsymbol v\|_{0,4}
+\|\beta\|_{0,\infty}\|\boldsymbol u+\boldsymbol g_\mu\|_{0,4}\|\boldsymbol u^k-\boldsymbol u\|_0\|\boldsymbol v\|_{0,4}\\
&\quad+\left|n(\boldsymbol u^k,\boldsymbol u^k, \boldsymbol v)-n(\boldsymbol
u,\boldsymbol u,\boldsymbol v)\right|
+C\|\boldsymbol u^k-\boldsymbol u\|_1\|\boldsymbol g_\mu\|_1\|\boldsymbol
v\|_1\,.
\end{split}
\end{equation*}
The boundedness of $\boldsymbol u^k$ in $\boldsymbol{W}$,  \eqref{s3eq8}, the Poincar\'e inequality, and the above inequality  imply that
$$
\left|\bigl[G(\boldsymbol u^k)-G(\boldsymbol u),\boldsymbol v\bigr]\right|\to
0\quad\text{as}\quad k\to\infty\qquad\forall\,
\boldsymbol{v}\in\boldsymbol{W}\,.
$$
Thus, employing 
$$
|G(\boldsymbol u^k)-G(\boldsymbol
u)|_1=\sup\limits_{\overset{\boldsymbol
v\in\boldsymbol{W}}{\boldsymbol v\neq \boldsymbol
0}}\frac{\bigl[G(\boldsymbol u^k)-G(\boldsymbol u),\boldsymbol
v\bigr]}{|\boldsymbol v|_1}\,,
$$
we state that $G$ is continuous. Now, we note that for any $\boldsymbol u\in\boldsymbol{W}$  we have
\begin{equation}\label{s3eq21}
\begin{split}
&\bigl[G(\boldsymbol u),\boldsymbol u\bigr]
=\frac{1}{Re}\bigl(\varepsilon\nabla(\boldsymbol u+\boldsymbol g_\mu),\nabla\boldsymbol u\bigr)+\frac{1}{Re}\bigl(\alpha(\boldsymbol u+\boldsymbol g_\mu),\boldsymbol u\bigr)\\
&\quad +\bigl(\beta|\boldsymbol u+\boldsymbol g_\mu|(\boldsymbol u+\boldsymbol g_\mu),\boldsymbol u\bigr)
+n(\boldsymbol u+\boldsymbol g_\mu,\boldsymbol u+\boldsymbol g_\mu,\boldsymbol u)-(\boldsymbol f,\boldsymbol u)\\
&\ge\frac{\varepsilon_0}{Re}|\boldsymbol u|_1^2-\frac{\varepsilon_1}{Re}|(\nabla\boldsymbol g_\mu,\nabla\boldsymbol u)|
+\frac{1}{Re}(\alpha\boldsymbol u,\boldsymbol u)-\frac{1}{Re}|(\alpha\boldsymbol g_\mu,\boldsymbol u)|\\
&\quad +(\beta|\boldsymbol u+\boldsymbol g_\mu|,|\boldsymbol
u|^2)-\left|(\beta |\boldsymbol u+\boldsymbol g_\mu|\boldsymbol
g_\mu,\boldsymbol u)\right|\\
&\quad +n(\boldsymbol u,\boldsymbol g_\mu,\boldsymbol u)+n(\boldsymbol g_\mu,\boldsymbol g_\mu,\boldsymbol u)-\|\boldsymbol f\|_0\|\boldsymbol u\|_0\\
&\ge\frac{\varepsilon_0}{Re}|\boldsymbol u|_1^2-\frac{\varepsilon_1}{Re}|\boldsymbol g_\mu|_1|\boldsymbol u|_1\\
&\quad -\frac{1}{Re}\|\alpha\|_{0,\infty}\|\boldsymbol g_\mu\|_0\|\boldsymbol u\|_0
-\left|(\beta |\boldsymbol u+\boldsymbol g_\mu|\boldsymbol g_\mu,\boldsymbol
u)\right|\\
&\quad -\left|n(\boldsymbol u,\boldsymbol g_\mu,\boldsymbol u)\right|
-C\|\boldsymbol g_\mu\|_1^2\|\boldsymbol u\|_1
-\|\boldsymbol f\|_0\|\boldsymbol u\|_0\,.
\end{split}
\end{equation}
From the Poincar\'e  inequality, we infer the estimate
\begin{equation*}
\|v\|_1\le C|v|_1\quad\forall\; v\in H_0^1(\Omega),
\end{equation*}
which together with \eqref{s3eq10}, \eqref{s3eq11} and \eqref{s3eq21} results in 
\begin{equation*}
\begin{split}
&\bigl[G(\boldsymbol u),\boldsymbol
u\bigr]\ge\left\{\frac{\varepsilon_0}{Re}-\delta(1+\|\beta\|_{0,\infty})\right\}|\boldsymbol u|_1^2\\
&\quad -\Bigl\{\frac{\varepsilon_1}{Re}|\boldsymbol
g_\mu|_1+C_1\frac{1}{Re}\|\alpha\|_{0,\infty}\|\boldsymbol g_\mu\|_0
+\delta \|\beta\|_{0,\infty}\|\boldsymbol g_\mu\|_0
+C_2\|\boldsymbol g_\mu\|_1^2+C_3\|\boldsymbol f\|_0\Bigr\}|\boldsymbol u|_1.
\end{split}
\end{equation*}
Choosing $\delta$ such that
\begin{equation*}
0<\delta<\delta_0:=\frac{\varepsilon_0}{Re}\bigl(1+\|\beta\|_{0,\infty}\bigr)^{-1}\,,
\end{equation*}
and $r>r_0$ with
\begin{equation}\label{s3eq22}
\begin{split}
r_0:=\frac{\D\frac{\varepsilon_1}{Re}|\boldsymbol
g_\mu|_1+\frac{1}{Re}C_1\|\alpha\|_{0,\infty}\|\boldsymbol
g_\mu\|_0+\delta \|\beta\|_{0,\infty}\|\boldsymbol g_\mu\|_0+C_2\|\boldsymbol g_\mu\|_1^2+C_3\|\boldsymbol f\|_0}
{\D\frac{\varepsilon_0}{Re}-\delta(1+\|\beta\|_{0,\infty})}\,,
\end{split}
\end{equation}
leads to the desired assertion (\ref{s3eq20}).
\end{proof}
The following lemma plays a key role in the existence proof.
\begin{lemma}\label{s3lem5}
Let $Y$ be finite-dimensional Hilbert space with inner product $[\cdot,\cdot]$ inducing a norm $\|\cdot\|$, and $T:\,Y\rightarrow Y$ be a continuous mapping such that 
$$\bigl[T(x),x\bigr]>0\quad\textrm{for}\quad\|x\|=r_0>0.$$
Then there exists $x\in Y$, with $\|x\|\le r_0$, such that
$$T(x)=0.$$ 
\end{lemma}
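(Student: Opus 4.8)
The plan is to deduce the statement from Brouwer's fixed point theorem by a contradiction argument; this is the classical route (cf.\ the corresponding lemma in \cite{giro}). Suppose the conclusion fails, i.e.\ $T(x)\neq 0$ for every $x$ in the closed ball $\bar B:=\{x\in Y:\ \|x\|\le r_0\}$. Since $Y$ is finite-dimensional, $\bar B$ is compact and convex, which is exactly the setting in which Brouwer's theorem guarantees a fixed point for every continuous self-map of $\bar B$.

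The key step is to manufacture such a self-map out of $T$. Under the contradiction hypothesis the map
\begin{equation*}
S:\ \bar B\to Y,\qquad S(x):=-\,r_0\,\frac{T(x)}{\|T(x)\|}\,,
\end{equation*}
is well defined (the denominator never vanishes) and continuous, because $T$ is continuous and $x\mapsto\|T(x)\|$ is then continuous and strictly positive on $\bar B$. Moreover $\|S(x)\|=r_0$ for all $x\in\bar B$, so $S$ maps $\bar B$ into (indeed onto the boundary sphere of) $\bar B$. Brouwer's fixed point theorem therefore provides $x_0\in\bar B$ with $S(x_0)=x_0$.

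It remains to exhibit the contradiction. From $x_0=S(x_0)$ we get $\|x_0\|=\|S(x_0)\|=r_0$, so the hypothesis of the lemma yields $[T(x_0),x_0]>0$. On the other hand, substituting $x_0=-r_0\,T(x_0)/\|T(x_0)\|$ and using bilinearity of $[\cdot,\cdot]$,
\begin{equation*}
[T(x_0),x_0]=-\frac{r_0}{\|T(x_0)\|}\,[T(x_0),T(x_0)]=-r_0\,\|T(x_0)\|<0\,,
\end{equation*}
since $T(x_0)\neq 0$. This contradiction shows that $T$ must vanish at some point of $\bar B$, which is the claim.

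I do not anticipate any genuine obstacle: the only points that need a word of care are that a closed bounded set in a finite-dimensional space is compact (so that Brouwer applies) and that the normalisation defining $S$ is legitimate precisely because we have assumed $T$ to be zero-free on $\bar B$ — the very hypothesis being refuted.
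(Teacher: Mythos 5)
Your proof is correct: the Brouwer fixed-point argument with the auxiliary map $S(x)=-r_0\,T(x)/\|T(x)\|$ is exactly the classical proof of this lemma. The paper itself gives no argument beyond citing \cite{Lions_1}, and what you wrote is precisely the standard proof found in that reference (and in \cite{giro}), so there is nothing to add.
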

\begin{proof}
See \cite{Lions_1}.
\end{proof}
Now we are able to prove the main result concerning existence of velocity.
\begin{theorem}\label{s3thm6}
The problem (\ref{s3eq19}) has at least one solution $\boldsymbol u\in\boldsymbol{W}$.
\end{theorem}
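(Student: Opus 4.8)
The plan is to solve \eqref{s3eq19} by a Galerkin method, using the coercivity-type bound of Lemma~\ref{s3lem4} together with the abstract fixed-point result of Lemma~\ref{s3lem5} to produce finite-dimensional approximations, and then passing to the limit with the help of the weak sequential continuity \eqref{s3eq8}. First I would fix a countable basis $\{\boldsymbol\phi_j\}_{j\in\N}$ of the separable Hilbert space $\boldsymbol{W}$, set $\boldsymbol{W}_m:=\mathrm{span}\{\boldsymbol\phi_1,\dots,\boldsymbol\phi_m\}$, let $P_m:\boldsymbol{W}\to\boldsymbol{W}_m$ be the $[\cdot,\cdot]$-orthogonal projection, and define $G_m:=P_m\circ G|_{\boldsymbol{W}_m}:\boldsymbol{W}_m\to\boldsymbol{W}_m$. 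By Lemma~\ref{s3lem4} the map $G$ is continuous, so each $G_m$ is continuous on the finite-dimensional space $\boldsymbol{W}_m$; moreover $[G_m(\boldsymbol u),\boldsymbol u]=[G(\boldsymbol u),\boldsymbol u]$ for $\boldsymbol u\in\boldsymbol{W}_m$, so \eqref{s3eq20} gives $[G_m(\boldsymbol u),\boldsymbol u]>0$ whenever $|\boldsymbol u|_1=r$, with the \emph{same} radius $r>r_0$, since the expression \eqref{s3eq22} for $r_0$ does not depend on $m$. Lemma~\ref{s3lem5} then yields, for every $m$, some $\boldsymbol u^m\in\boldsymbol{W}_m$ with $|\boldsymbol u^m|_1\le r$ and
\begin{equation*}
\bigl[G(\boldsymbol u^m),\boldsymbol v\bigr]=0\qquad\forall\,\boldsymbol v\in\boldsymbol{W}_m\,.
\end{equation*}

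Next, since $(\boldsymbol u^m)$ is bounded in $\boldsymbol{W}\subset\boldsymbol H^1(\Omega)$, a subsequence (not relabelled) converges weakly in $\boldsymbol H^1(\Omega)$ to some $\boldsymbol u\in\boldsymbol{W}$ (the subspace $\boldsymbol{W}$ being weakly closed) and, by the Rellich--Kondrachov theorem, strongly in $\boldsymbol L^2(\Omega)$ and in $\boldsymbol L^4(\Omega)$, the embedding $\boldsymbol H^1(\Omega)\hookrightarrow\boldsymbol L^4(\Omega)$ being compact for $n=2,3$. Fixing $k$ and $\boldsymbol v\in\boldsymbol{W}_k$, the identity above holds with this $\boldsymbol v$ for all $m\ge k$, and it remains to pass to the limit in each term of \eqref{s3eq18}. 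The linear contributions $a(\cdot,\boldsymbol v)$ and $c(\cdot,\boldsymbol v)$ pass to the limit by weak $\boldsymbol H^1$- and strong $\boldsymbol L^2$-convergence, respectively. For the Forchheimer term one uses the elementary bound $\bigl||\boldsymbol a|\boldsymbol a-|\boldsymbol b|\boldsymbol b\bigr|\le C(|\boldsymbol a|+|\boldsymbol b|)|\boldsymbol a-\boldsymbol b|$, together with $\beta\in L^\infty(\Omega)$ and strong $\boldsymbol L^4$-convergence, to get $d(\boldsymbol u^m+\boldsymbol g_\mu;\boldsymbol u^m+\boldsymbol g_\mu,\boldsymbol v)\to d(\boldsymbol u+\boldsymbol g_\mu;\boldsymbol u+\boldsymbol g_\mu,\boldsymbol v)$. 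For the convective part, expanding $n(\boldsymbol u^m+\boldsymbol g_\mu,\boldsymbol u^m+\boldsymbol g_\mu,\boldsymbol v)$ into its four terms, the term $n(\boldsymbol u^m,\boldsymbol u^m,\boldsymbol v)\to n(\boldsymbol u,\boldsymbol u,\boldsymbol v)$ is precisely \eqref{s3eq8}, while the mixed terms $n(\boldsymbol u^m,\boldsymbol g_\mu,\boldsymbol v)$, $n(\boldsymbol g_\mu,\boldsymbol u^m,\boldsymbol v)$ and the fixed term $n(\boldsymbol g_\mu,\boldsymbol g_\mu,\boldsymbol v)$ converge by the continuity estimate \eqref{s3eq7} combined with the strong $\boldsymbol L^2$- or $\boldsymbol L^4$- and weak $\boldsymbol H^1$-convergence of $\boldsymbol u^m$. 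Hence $[G(\boldsymbol u),\boldsymbol v]=0$ for every $\boldsymbol v\in\boldsymbol{W}_k$ and every $k$; since $\bigcup_k\boldsymbol{W}_k$ is dense in $\boldsymbol{W}$ and $G(\boldsymbol u)\in\boldsymbol{W}$, continuity of the inner product gives $[G(\boldsymbol u),\boldsymbol v]=0$ for all $\boldsymbol v\in\boldsymbol{W}$, i.e. $\boldsymbol u$ solves \eqref{s3eq19}.

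The main obstacle is the passage to the limit in the two nonlinear terms $n$ and $d$: weak $\boldsymbol H^1$-convergence is not sufficient by itself, and one genuinely relies on the compact embedding $\boldsymbol H^1\hookrightarrow\boldsymbol L^4$ (which is where the restriction $n\le 3$ enters) to upgrade to strong $\boldsymbol L^4$-convergence — after which \eqref{s3eq8} handles the convective term and the quadratic Lipschitz bound handles the Forchheimer term. A secondary point requiring care is that the Galerkin identity only holds against $\boldsymbol v\in\boldsymbol{W}_m$, so the limit must be taken with a fixed test function and the density of $\bigcup_m\boldsymbol{W}_m$ in $\boldsymbol{W}$ invoked at the very end; one should also note that the uniform bound $|\boldsymbol u^m|_1\le r$ is what furnishes the weakly convergent subsequence in the first place.
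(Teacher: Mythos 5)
Your proposal is correct and follows essentially the same route as the paper: a Galerkin approximation in finite-dimensional subspaces of $\boldsymbol{W}$, existence of the approximants via Lemma~\ref{s3lem4} and Lemma~\ref{s3lem5}, a uniform bound $|\boldsymbol u^m|_1\le r$ giving a weakly convergent subsequence, compactness of $\boldsymbol H^1(\Omega)\hookrightarrow\boldsymbol L^4(\Omega)$ to pass to the limit in the nonlinear terms (using \eqref{s3eq8} for the convective part), and density of the union of the finite-dimensional spaces to conclude. Your write-up in fact spells out the term-by-term limit passage and the fixed-test-function argument that the paper's proof leaves implicit, but the underlying argument is the same.
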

\begin{proof}
We construct the approximate sequence of Galerkin solutions. Since the space $\boldsymbol{W}$ is separable, there exists a sequence of linearly independent elements $\left(\boldsymbol w^i\right)_{i\in\N}\subset\boldsymbol{W}$. Let $\boldsymbol{X}_m$ be the finite dimensional subspace of $\boldsymbol{W}$ with 
$$\boldsymbol{X}_m:=\text{span}\{\boldsymbol{w}^i\,,~ i=1,\ldots ,m\}$$ 
and endowed with the scalar product of $\boldsymbol{W}$. Let $\boldsymbol u^m=\sum\limits_{j=1}^ma_j\boldsymbol w^j,\;
a_j\in\R$\,, be a Galerkin solution of (\ref{s3eq19}) defined by 
\begin{align}
\label{s3eq23} & \bigl[ G(\boldsymbol u^m),\boldsymbol w^j\bigr]=0,\quad\forall\; j=1,\ldots ,m\,.
\end{align}
From Lemma \ref{s3lem4}  and Lemma \ref{s3lem5} we conclude that  
\begin{equation}\label{s3eq24}
\bigl[G(\boldsymbol u^m),\boldsymbol w\bigr]=0\quad\forall\;\boldsymbol w\in\boldsymbol X_m
\end{equation}
has a solution  $\boldsymbol u^m\in\boldsymbol X_m$. The unknown coefficients $a_j$ can be obtained from the algebraic system (\ref{s3eq23}). On the other hand, multiplying (\ref{s3eq23}) by $a_j$, and adding the equations for $j=1,\ldots, m$ we have
\begin{equation*}
\begin{split}
0&=\bigl[G(\boldsymbol u^m),\boldsymbol u^m\bigr]\\
&\ge \left\{\frac{1}{Re}-\delta(1+\|\beta\|_{0,\infty})\right\}|\boldsymbol u^m|_1^2\\
&\quad -\Bigl\{\frac{1}{Re}|\boldsymbol
g_\mu|_1+C_1\frac{1}{Re}\|\alpha\|_{0,\infty}\|\boldsymbol g_\mu\|_0
+\delta \|\beta\|_{0,\infty}\|\boldsymbol g_\mu\|_0
+C_2\|\boldsymbol g_\mu\|_1^2+C_3\|\boldsymbol f\|_0\Bigr\}|\boldsymbol u^m|_1.
\end{split}
\end{equation*}
This gives together with (\ref{s3eq22}) the uniform boundedness in $\boldsymbol{W}$
\begin{equation*}
|\boldsymbol u^m|_1\le r_0,
\end{equation*}
therefore there exists $\boldsymbol u\in\boldsymbol{W}$ and a subsequence $m_k\rightarrow\infty$ ( we write for the convenience $m$ instead of $m_k$ ) such that
\begin{equation*}
\boldsymbol u^m\rightharpoonup\boldsymbol u\quad\textrm{in}\quad\boldsymbol{W}.
\end{equation*}
Furthermore, the compactness of embedding $H^1(\Omega)\hookrightarrow L^4(\Omega)$ implies 
\begin{equation*}
\boldsymbol u^m\rightarrow\boldsymbol u\quad\textrm{in}\quad\boldsymbol{L}^4(\Omega).
\end{equation*}
Taking the limit in (\ref{s3eq24}) with $m\rightarrow\infty$ we get
\begin{equation}\label{s3eq25}
\bigl[G(\boldsymbol u),\boldsymbol w\bigr]=0\quad\forall\;\boldsymbol w\in\boldsymbol X_m.
\end{equation}
Finally, we apply the continuity argument and state that (\ref{s3eq25}) is preserved for any $\boldsymbol w\in\boldsymbol{W}$, therefore $\boldsymbol u$ is the solution of (\ref{s3eq19}).
\end{proof}
For the reconstruction of the pressure we need inf-sup-theorem 
\begin{theorem}\label{s3thm7}
Assume that the bilinear form $b(\cdot,\cdot)$ satisfies the inf-sup condition
\begin{equation}\label{s3eq26}
\inf\limits_{q\in M}\sup\limits_{\boldsymbol v\in\boldsymbol{X}_0}\frac{b(\boldsymbol v,q)}{|\boldsymbol v|_1\,\|q\|_0}\ge\gamma>0.
\end{equation}
Then, for each solution $\boldsymbol u$ of the nonlinear problem (\ref{s3eq19}) there exists a unique pressure $p\in M$ such that the pair $(\boldsymbol u,p)\in\boldsymbol{V}$ is a solution of the homogeneous problem (\ref{s3eq17}).
\end{theorem}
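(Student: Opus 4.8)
The plan is to reconstruct the pressure from the momentum residual by a standard saddle-point argument. Fix a solution $\boldsymbol u\in\boldsymbol W$ of the nonlinear problem \eqref{s3eq19}, and set $\boldsymbol w:=\boldsymbol u-\boldsymbol g_\mu$, so that $\boldsymbol w\in\boldsymbol W$ and $\boldsymbol u=\boldsymbol w+\boldsymbol g_\mu$ satisfies \eqref{s3eq19} by construction. Define the linear functional $\ell\in\boldsymbol X_0'$ by
\begin{equation*}
\langle\ell,\boldsymbol v\rangle:=A(\boldsymbol u;\boldsymbol u,\boldsymbol v)-(\boldsymbol f,\boldsymbol v)\,,\qquad\boldsymbol v\in\boldsymbol X_0\,.
\end{equation*}
By the continuity estimates \eqref{s3eq7}, \eqref{s3eq7b} collected in Lemma~\ref{s3lem2}, together with the boundedness of $a(\cdot,\cdot)$ and $c(\cdot,\cdot)$ (which follows from \eqref{A1} and the Cauchy--Schwarz inequality) and the fact that $\|\boldsymbol u\|_1$ is finite, the functional $\ell$ is bounded on $\boldsymbol X_0$. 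Moreover, $\langle\ell,\boldsymbol v\rangle=0$ for every $\boldsymbol v\in\boldsymbol W$: indeed this is precisely the statement $[G(\boldsymbol w),\boldsymbol v]=0$ from \eqref{s3eq19}, rewritten using the definition \eqref{s3eq18} of $G$ and $[\cdot,\cdot]$.

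The next step is to invoke the abstract inf-sup (de~Rham / Ne\v{c}as) lemma: under the hypothesis \eqref{s3eq26}, the operator $B:\boldsymbol X_0\to M'$ induced by $b(\cdot,\cdot)$ has closed range, its transpose $B^t:M\to\boldsymbol X_0'$ is injective, and the range of $B^t$ is exactly the polar set $\boldsymbol W^{\perp}=\{\ell\in\boldsymbol X_0':\langle\ell,\boldsymbol v\rangle=0\ \forall\,\boldsymbol v\in\boldsymbol W\}$. This is the content of the linear saddle-point theory of \cite{BernardiLaval} referenced earlier in the text. Since we have just shown $\ell\in\boldsymbol W^{\perp}$, there exists a unique $p\in M$ with $B^t p=\ell$, that is,
\begin{equation*}
b(\boldsymbol v,p)=\langle\ell,\boldsymbol v\rangle=A(\boldsymbol u;\boldsymbol u,\boldsymbol v)-(\boldsymbol f,\boldsymbol v)\qquad\forall\,\boldsymbol v\in\boldsymbol X_0\,.
\end{equation*}
Rearranging and recalling $\boldsymbol u=\boldsymbol w+\boldsymbol g_\mu$ gives the first equation of \eqref{s3eq17} for all $\boldsymbol v\in\boldsymbol X_0$; the second equation, $b(\boldsymbol w+\boldsymbol g_\mu,q)=0$ for all $q\in M$, holds automatically because $\boldsymbol w\in\boldsymbol W$ and $\text{div}(\varepsilon\boldsymbol g_\mu)=0$ by \eqref{s3eq9}. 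Uniqueness of $p$ follows from the injectivity of $B^t$, equivalently from \eqref{s3eq26}: if $p_1,p_2$ both work then $b(\boldsymbol v,p_1-p_2)=0$ for all $\boldsymbol v\in\boldsymbol X_0$, and the inf-sup condition forces $\|p_1-p_2\|_0=0$.

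There is no serious obstacle here, since the nonlinearity has already been fully absorbed into the fixed functional $\ell$; the only point requiring mild care is the verification that $\ell$ is continuous on $\boldsymbol X_0$ and not merely on $\boldsymbol W$, which is where the continuity bounds of Lemma~\ref{s3lem2} for $n$ and $d$, valid on all of $\boldsymbol H^1(\Omega)$, are used. Everything else is the textbook correspondence between inf-sup stability and surjectivity of $B$ onto $\boldsymbol W^{\perp}$; one may alternatively phrase the argument directly in terms of the Banach closed-range theorem applied to $B$, but appealing to \cite{BernardiLaval} keeps the exposition short.
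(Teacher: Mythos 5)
Your proposal is correct and is essentially the paper's own proof, which simply cites \cite[Theorem 1.4, \S 1, Chapter IV]{giro}: you reconstruct exactly that standard argument — the momentum residual defines a bounded functional on $\boldsymbol X_0$ vanishing on $\boldsymbol W$ by \eqref{s3eq19}, and the inf-sup condition \eqref{s3eq26} identifies the polar set of $\boldsymbol W$ with the range of the injective transpose operator, yielding a unique $p\in M$. One notational slip worth fixing: since $\boldsymbol g_\mu\notin\boldsymbol H_0^1(\Omega)$ you cannot have both $\boldsymbol u\in\boldsymbol W$ and $\boldsymbol w=\boldsymbol u-\boldsymbol g_\mu\in\boldsymbol W$ unless $\boldsymbol g=\boldsymbol 0$; either take $\boldsymbol u\in\boldsymbol W$ as the solution of \eqref{s3eq19} and build $\ell$ from the full velocity $\boldsymbol u+\boldsymbol g_\mu$, or take $\boldsymbol u$ as the full velocity with $\boldsymbol w\in\boldsymbol W$ — with either consistent reading the argument stands unchanged.
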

\begin{proof}
See \cite[Theorem 1.4, \S 1, Chapter IV]{giro}. 
\end{proof}
We end up this subsection by proving the existence of the pressure. 
\begin{theorem}\label{s3thm8}
Let $\boldsymbol w$ be solution of problem \eqref{s3eq19}. Then, there exists unique pressure $p\in M$.
\end{theorem}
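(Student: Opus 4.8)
The plan is to reconstruct the pressure directly from Theorem~\ref{s3thm7}, so the only real work is verifying that its hypothesis---the inf-sup condition~\eqref{s3eq26} for $b(\boldsymbol v,q)=(\mathrm{div}(\varepsilon\boldsymbol v),q)$---holds on $\boldsymbol X_0\times M$. Once that is in place, Theorem~\ref{s3thm7} applied to the solution $\boldsymbol w$ of~\eqref{s3eq19} yields a unique $p\in M$ with $(\boldsymbol w,p)\in\boldsymbol V$ solving~\eqref{s3eq17}, which is exactly the assertion.

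First I would recall the classical inf-sup (LBB) condition for the usual divergence: there is $\gamma_0>0$ with $\sup_{\boldsymbol v\in\boldsymbol X_0}(\mathrm{div}\,\boldsymbol v,q)/|\boldsymbol v|_1\ge\gamma_0\|q\|_0$ for all $q\in M=L^2_0(\Omega)$ (see \cite{giro}). The idea is then to transfer this to the weighted divergence. Given $q\in M$, pick $\boldsymbol z\in\boldsymbol X_0$ realizing the classical bound, i.e. $(\mathrm{div}\,\boldsymbol z,q)\ge\gamma_0\|q\|_0\,|\boldsymbol z|_1$, and set $\boldsymbol v:=\varepsilon^{-1}\boldsymbol z$. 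By Theorem~\ref{s3thm1} the map $\boldsymbol z\mapsto\varepsilon^{-1}\boldsymbol z$ is an isomorphism of $\boldsymbol H^1_0(\Omega)$ onto itself, so $\boldsymbol v\in\boldsymbol X_0$ and $|\boldsymbol v|_1\le\|\boldsymbol v\|_1\le C\{\varepsilon_0^{-1}+\varepsilon_0^{-2}|\varepsilon|_{1,3}\}\,\|\boldsymbol z\|_1\le C'|\boldsymbol z|_1$, using Poincar\'e on $\boldsymbol X_0$. Then $b(\boldsymbol v,q)=(\mathrm{div}(\varepsilon\varepsilon^{-1}\boldsymbol z),q)=(\mathrm{div}\,\boldsymbol z,q)\ge\gamma_0\|q\|_0\,|\boldsymbol z|_1\ge (\gamma_0/C')\,\|q\|_0\,|\boldsymbol v|_1$, which gives~\eqref{s3eq26} with $\gamma:=\gamma_0/C'>0$.

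With~\eqref{s3eq26} established, I invoke Theorem~\ref{s3thm7} with $\boldsymbol u:=\boldsymbol w$ (the solution of~\eqref{s3eq19} furnished by Theorem~\ref{s3thm6}) to obtain a unique $p\in M$ such that $(\boldsymbol w,p)\in\boldsymbol V$ satisfies~\eqref{s3eq17}. Concretely, one tests~\eqref{s3eq17} with $(\boldsymbol v,0)$ for $\boldsymbol v\in\boldsymbol X_0$: since $[G(\boldsymbol w),\boldsymbol v]=0$ already holds for $\boldsymbol v\in\boldsymbol W$, the residual functional $\boldsymbol v\mapsto A(\boldsymbol w+\boldsymbol g_\mu;\boldsymbol w+\boldsymbol g_\mu,\boldsymbol v)-(\boldsymbol f,\boldsymbol v)$ vanishes on $\boldsymbol W$, hence by the closed-range / de Rham argument underlying Theorem~\ref{s3thm7} it is represented as $b(\boldsymbol v,p)$ for a unique $p\in M$; uniqueness is immediate from~\eqref{s3eq26}.

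The main obstacle---really the only nontrivial point---is the transfer of the inf-sup constant through the weight $\varepsilon$; this is where Theorem~\ref{s3thm1} is essential, since it guarantees that division by $\varepsilon$ does not destroy $\boldsymbol H^1_0$-regularity and controls the norm blow-up by $\varepsilon_0$ and $|\varepsilon|_{1,3}$. Everything else is a direct citation of Theorem~\ref{s3thm7} and the already-proven solvability of the velocity problem. One should also note that the compatibility condition~\eqref{zerocomp} on $\boldsymbol g$ ensures the right-hand side functional annihilates constants in the pressure test space, which is consistent with seeking $p$ in $M=L^2_0(\Omega)$ rather than all of $L^2(\Omega)$.
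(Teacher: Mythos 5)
Your proposal is correct and follows essentially the same route as the paper: verify the inf-sup condition \eqref{s3eq26} for the weighted form $b(\cdot,\cdot)$ by taking the field from the classical divergence inf-sup condition in \cite{giro}, dividing by $\varepsilon$, and controlling the norm via the isomorphism of Theorem~\ref{s3thm1}, then invoke Theorem~\ref{s3thm7} to recover the unique pressure. The additional remarks on the de Rham argument and the compatibility condition \eqref{zerocomp} are consistent with the paper but not needed beyond what Theorem~\ref{s3thm7} already provides.
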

\begin{proof}
We verify the inf-sup condition (\ref{s3eq26}) of Theorem
\ref{s3thm7} by employing the isomorphism of Theorem \ref{s3thm1}.
From \cite[Corollary 2.4, \S 2, Chapter I]{giro} follows that for any $q$ in $L_0^2(\Omega)$ there exists $\boldsymbol v$ in $\boldsymbol H_0^1(\Omega)$ such that 
$$(\textrm{div}\,\boldsymbol v,q)\ge \gamma^*\|\boldsymbol v\|_1\|q\|_0$$
with a positive constant $\gamma^*$. Setting $\boldsymbol u=\boldsymbol v/\varepsilon$ and applying the isomorphism in Theorem \ref{s3thm1}, we obtain the estimate
\begin{equation*}
b(\boldsymbol u,q)=(\textrm{div}\,\boldsymbol v,q)\ge
\gamma^*\|\boldsymbol v\|_1\|q\|_0\ge \gamma_\varepsilon
\|\boldsymbol u\|_1\|q\|_0
\end{equation*}
where
$\D\gamma_\varepsilon=\frac{\gamma^*}{C\left\{\varepsilon_0^{-1}+\varepsilon_0^{-2}\,|\varepsilon|_{1,3}\right\}}$. From the above estimate we conclude the inf-sup condition \eqref{s3eq26}.
\end{proof} 
\subsection{Uniqueness of weak solution}
We exploit a priori estimates in order to prove uniqueness of weak velocity and pressure.
\begin{theorem}\label{s3thm9}
If $\|\boldsymbol g_\mu\|_1$, $\displaystyle\|\boldsymbol f\|_{-1}:=\sup\limits_{\boldsymbol 0\neq \boldsymbol v\in \boldsymbol H^1(\Omega)}\frac{(\boldsymbol f,\boldsymbol v)}{\|\boldsymbol{v}\|_1}$ are sufficiently small, then the solution of (\ref{s3eq19}) is unique. 
\end{theorem}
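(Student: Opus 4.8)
The plan is to follow the classical Navier--Stokes uniqueness argument (see \cite[Chapter IV]{giro}), adapted to the extra Darcy term $c(\cdot,\cdot)$ and to the nonlinear Forchheimer term $d(\cdot;\cdot,\cdot)$, and to exploit that $\boldsymbol{g}_\mu$ can be made small in the sense of Lemma \ref{s3lem3} together with smallness of $\|\boldsymbol{g}_\mu\|_1$ and $\|\boldsymbol{f}\|_{-1}$. First I would establish an a priori bound: taking $\boldsymbol{v}=\boldsymbol{w}$ in \eqref{s3eq19} and using coercivity of $a+c$, the skew-symmetry $n(\boldsymbol{w}+\boldsymbol{g}_\mu;\boldsymbol{w},\boldsymbol{w})=0$ on $\boldsymbol{W}$ (Lemma \ref{s3lem2}), the sign of $d(\boldsymbol{w}+\boldsymbol{g}_\mu;\boldsymbol{w},\boldsymbol{w})\ge 0$, and the smallness estimates \eqref{s3eq10}--\eqref{s3eq11}, one gets $|\boldsymbol{w}|_1\le r_0$ with $r_0$ proportional to $\|\boldsymbol{g}_\mu\|_1+\|\boldsymbol{f}\|_{-1}$ (plus lower-order terms in $\|\boldsymbol{g}_\mu\|$), exactly as in Lemma \ref{s3lem4}; hence $r_0\to 0$ as the data tend to zero.

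Next, suppose $\boldsymbol{w}_1,\boldsymbol{w}_2\in\boldsymbol{W}$ both solve \eqref{s3eq19}, set $\boldsymbol{z}:=\boldsymbol{w}_1-\boldsymbol{w}_2$, subtract the two equations and test with $\boldsymbol{v}=\boldsymbol{z}$. The linear parts give $a(\boldsymbol{z},\boldsymbol{z})+c(\boldsymbol{z},\boldsymbol{z})\ge \tfrac{\varepsilon_0}{Re}|\boldsymbol{z}|_1^2$. For the convective term, writing $\boldsymbol{u}_i=\boldsymbol{w}_i+\boldsymbol{g}_\mu$, the difference $n(\boldsymbol{u}_1,\boldsymbol{u}_1,\boldsymbol{z})-n(\boldsymbol{u}_2,\boldsymbol{u}_2,\boldsymbol{z})=n(\boldsymbol{z},\boldsymbol{u}_1,\boldsymbol{z})+n(\boldsymbol{u}_2,\boldsymbol{z},\boldsymbol{z})$; the second term vanishes because $\text{div}(\varepsilon\boldsymbol{u}_2)=0$ and $\boldsymbol{u}_2\cdot\boldsymbol{n}|_\Gamma=0$, while $n(\boldsymbol{z},\boldsymbol{u}_1,\boldsymbol{z})=n(\boldsymbol{z},\boldsymbol{w}_1,\boldsymbol{z})+n(\boldsymbol{z},\boldsymbol{g}_\mu,\boldsymbol{z})$ is bounded by $C_\varepsilon|\boldsymbol{z}|_1^2|\boldsymbol{w}_1|_1+\delta|\boldsymbol{z}|_1^2$ using \eqref{s3eq7} and \eqref{s3eq11}. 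For the Forchheimer term I would use the pointwise monotonicity-type inequality $\bigl(|\boldsymbol{a}|\boldsymbol{a}-|\boldsymbol{b}|\boldsymbol{b}\bigr)\cdot(\boldsymbol{a}-\boldsymbol{b})\ge 0$, so that $d(\boldsymbol{u}_1;\boldsymbol{u}_1,\boldsymbol{z})-d(\boldsymbol{u}_2;\boldsymbol{u}_2,\boldsymbol{z})=\bigl(\beta(|\boldsymbol{u}_1|\boldsymbol{u}_1-|\boldsymbol{u}_2|\boldsymbol{u}_2),\boldsymbol{z}\bigr)\ge 0$ (note $\boldsymbol{u}_1-\boldsymbol{u}_2=\boldsymbol{z}$), so this term has a favorable sign and may simply be dropped.

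Collecting terms yields an inequality of the form
\begin{equation*}
\left\{\frac{\varepsilon_0}{Re}-\delta-C_\varepsilon|\boldsymbol{w}_1|_1\right\}|\boldsymbol{z}|_1^2\le 0.
\end{equation*}
Since $\delta$ was already fixed below $\delta_0$ in Lemma \ref{s3lem4} (and can be shrunk further if needed) and $|\boldsymbol{w}_1|_1\le r_0$ with $r_0$ small when $\|\boldsymbol{g}_\mu\|_1$ and $\|\boldsymbol{f}\|_{-1}$ are small, the bracket is strictly positive, forcing $|\boldsymbol{z}|_1=0$, i.e. $\boldsymbol{w}_1=\boldsymbol{w}_2$. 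Finally, uniqueness of the velocity $\boldsymbol{u}=\boldsymbol{w}+\boldsymbol{g}_\mu$ follows, and uniqueness of the pressure $p\in M$ is immediate from Theorem \ref{s3thm8} (the inf-sup condition gives a one-to-one correspondence between $\boldsymbol{u}$ and $p$).

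The main obstacle is bookkeeping the precise smallness threshold: one must check that the same $\delta$ and the same constants $C_\varepsilon$, $C_1,C_2,C_3$ appearing in \eqref{s3eq22} combine so that $\tfrac{\varepsilon_0}{Re}-\delta-C_\varepsilon r_0>0$ is simultaneously achievable — that is, that choosing the data small does not force $r_0$ back up through its dependence on $\mu$ via $\lambda(\mu)$. Since $\mu$ is chosen first (to fix $\delta$) and then the data are taken small, this is consistent, but the quantitative statement "sufficiently small" should be made explicit as a single inequality relating $\|\boldsymbol{g}_\mu\|_1$, $\|\boldsymbol{f}\|_{-1}$, $\varepsilon_0$, $Re$ and $\|\beta\|_{0,\infty}$.
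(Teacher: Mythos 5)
Your argument is correct, and its skeleton is the same as the paper's: an a priori bound obtained by testing with the solution itself, subtraction of the two equations, testing with the difference $\boldsymbol z=\boldsymbol w_1-\boldsymbol w_2$, coercivity of $a+c$, skew-symmetry for the convective term together with the continuity bound \eqref{s3eq7} and the Hopf-type smallness \eqref{s3eq11} for $n(\boldsymbol z,\boldsymbol g_\mu,\boldsymbol z)$, and finally smallness of $\|\boldsymbol g_\mu\|_1$ and $\|\boldsymbol f\|_{-1}$ to make the coefficient of $|\boldsymbol z|_1^2$ positive, with pressure uniqueness delegated to Theorem \ref{s3thm8}. The one genuinely different step is the Forchheimer term: the paper estimates the difference of the $d$-terms in absolute value, see \eqref{s3eq28}--\eqref{s3eq29}, and absorbs it using the a priori bound \eqref{s3eq32} on $\|\boldsymbol u_1\|_1,\|\boldsymbol u_2\|_1$, so the small-data hypothesis is consumed there as well; you instead use the pointwise monotonicity $\bigl(|\boldsymbol a|\boldsymbol a-|\boldsymbol b|\boldsymbol b\bigr)\cdot(\boldsymbol a-\boldsymbol b)\ge 0$ together with $\beta\ge 0$ and simply drop this term, which is valid (it enters the identity with the favorable sign) and cleaner: smallness is then needed only to control $n(\boldsymbol z,\boldsymbol w_1,\boldsymbol z)$, and your version also avoids the paper's carrying of a $-(\boldsymbol f,\boldsymbol u_1-\boldsymbol u_2)$ term that in fact cancels in $G(\boldsymbol u_1)-G(\boldsymbol u_2)$ (your computation implicitly reflects this cancellation; $\|\boldsymbol f\|_{-1}$ still enters through the a priori bound, so the hypotheses are unchanged). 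One small repair: the vanishing of $n(\boldsymbol u_2,\boldsymbol z,\boldsymbol z)$ is not justified by ``$\boldsymbol u_2\cdot\boldsymbol n|_\Gamma=0$'', which fails in general because $\boldsymbol u_2$ has trace $\boldsymbol g$ on $\Gamma$; the correct reason is that $\mathrm{div}(\varepsilon\boldsymbol u_2)=0$ and the boundary term in the integration by parts disappears because $\boldsymbol z\in\boldsymbol H^1_0(\Omega)$ (the paper is equally casual on this point). With that adjustment, and your closing remark on the order of choices ($\mu$ and $\delta$ first, then the data), the proof goes through.
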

\begin{proof}
Assume that $(\boldsymbol u_1, p_1)$ and $(\boldsymbol u_2, p_2)$ are two different solutions of (\ref{s3eq17}). From \eqref{s3eq6} in Lemma \ref{s3lem2} we obtain $n(\boldsymbol w,\boldsymbol u,\boldsymbol u)=0~~\forall\;\boldsymbol w,\boldsymbol u\in\boldsymbol{W}$. Then, we obtain
\begin{equation}\label{s3eq27}
\begin{split}
0&=\bigl[G(\boldsymbol u_1)-G(\boldsymbol u_2),\boldsymbol
u_1-\boldsymbol u_2\bigr]\\
&=a(\boldsymbol u_1-\boldsymbol u_2,\boldsymbol u_1-\boldsymbol u_2)+c(\boldsymbol u_1-\boldsymbol u_2,\boldsymbol u_1-\boldsymbol u_2)-(\boldsymbol f,\boldsymbol u_1-\boldsymbol u_2)\\
&\quad + n(\boldsymbol u_1+\boldsymbol g_\mu,\boldsymbol u_1+\boldsymbol g_\mu,\boldsymbol u_1-\boldsymbol u_2)-n(\boldsymbol u_2+\boldsymbol g_\mu,\boldsymbol u_2+\boldsymbol g_\mu,\boldsymbol u_1-\boldsymbol u_2)\\
&\quad + (\beta |\boldsymbol u_1+\boldsymbol g_\mu|(\boldsymbol u_1+\boldsymbol g_\mu),\boldsymbol u_1-\boldsymbol u_2)\\
&\quad-\bigl(\beta |\boldsymbol u_2+\boldsymbol g_\mu|(\boldsymbol u_2+\boldsymbol g_\mu),\boldsymbol u_1-\boldsymbol u_2)\\
&\ge \frac{\varepsilon_0}{Re}|\boldsymbol u_1-\boldsymbol u_2|_1^2-\|\boldsymbol f\|_{-1}\|\boldsymbol u_1-\boldsymbol u_2\|_1\\
&\quad +n(\boldsymbol u_1-\boldsymbol u_2,\boldsymbol u_2+\boldsymbol g_\mu,\boldsymbol u_1-\boldsymbol u_2)\\
&\quad +\bigl(\beta|\boldsymbol u_1+\boldsymbol g_\mu|(\boldsymbol u_1-\boldsymbol u_2),\boldsymbol u_1-\boldsymbol u_2\bigr)\\
&\quad +\bigl(\beta(|\boldsymbol u_1+\boldsymbol g_\mu|-|\boldsymbol u_2+\boldsymbol g_\mu|)(\boldsymbol{u}_2+\boldsymbol{g}_\mu),\boldsymbol u_1-\boldsymbol u_2\bigr)\\
&\ge \frac{\varepsilon_0}{Re}|\boldsymbol u_1-\boldsymbol u_2|_1^2-\|\boldsymbol f\|_{-1}\|\boldsymbol u_1-\boldsymbol u_2\|_1\\
&\quad -\left|n(\boldsymbol u_1-\boldsymbol u_2,\boldsymbol
u_2,\boldsymbol u_1-\boldsymbol u_2)\right|
-\left|n(\boldsymbol
u_1-\boldsymbol u_2,\boldsymbol g_\mu,\boldsymbol u_1-\boldsymbol
u_2)\right|\\
&\quad -\|\beta\|_{0,\infty}\left|\bigl(|\boldsymbol u_1+\boldsymbol
g_\mu|\cdot |\boldsymbol u_1-\boldsymbol u_2|,|\boldsymbol u_1-\boldsymbol
u_2|\bigr)\right|\\
&\quad -\|\beta\|_{0,\infty}\left|\bigl(\bigl||\boldsymbol u_1+\boldsymbol
g_\mu|-|\boldsymbol u_2+\boldsymbol
g_\mu|\bigr|\cdot |\boldsymbol{u}_2+\boldsymbol{g}_\mu|,|\boldsymbol
u_1-\boldsymbol u_2|\bigr)\right|\,.
\end{split}
\end{equation}
From Cauchy-Schwarz inequality and Sobolev embedding $H^1(\Omega)\hookrightarrow
L^4(\Omega)$ we deduce
\begin{equation}\label{s3eq28}
\bigl|\bigl(|\boldsymbol u_1+\boldsymbol g_\mu|\cdot |\boldsymbol u_1-\boldsymbol
u_2|,|\boldsymbol u_1-\boldsymbol u_2|\bigr)\bigr|\le C\left\{\|\boldsymbol u_1\|_0+\|\boldsymbol g_\mu\|_0\right\}\|\boldsymbol u_1-\boldsymbol u_2\|_1^2\,,
\end{equation}
\begin{equation}\label{s3eq29}
\begin{split}
&\bigl|\bigl(\bigl||\boldsymbol u_1+\boldsymbol g_\mu|-|\boldsymbol
u_2+\boldsymbol
g_\mu|\bigr|\cdot |\boldsymbol{u}_2+\boldsymbol{g}_\mu|,|\boldsymbol
u_1-\boldsymbol u_2|\bigr)\bigr|\\
&\le C\left\{\|\boldsymbol u_2\|_0+\|\boldsymbol g_\mu\|_0\right\}\|\boldsymbol u_1-\boldsymbol u_2\|_1^2, 
\end{split}
\end{equation}
and according to (\ref{s3eq7}) we have
\begin{equation}\label{s3eq30}
|n(\boldsymbol u_1-\boldsymbol u_2,\boldsymbol u_2,\boldsymbol u_1-\boldsymbol u_2)|\le C\|\boldsymbol u_2\|_1\|\boldsymbol u_1-\boldsymbol u_2\|_1^2,
\end{equation}
and by (\ref{s3eq9}) we can find $\mu$ such that
\begin{equation}\label{s3eq31}
|n(\boldsymbol u_1-\boldsymbol u_2,\boldsymbol g_\mu,\boldsymbol u_1-\boldsymbol u_2)|\le \frac{\varepsilon_0}{4 Re}\|\boldsymbol u_1-\boldsymbol u_2\|_1^2.
\end{equation}
Now, we find upper bounds for $\boldsymbol u_1$ and $\boldsymbol u_2$. Testing the equation (\ref{s3eq17}) with $\boldsymbol u$ results in
\begin{equation*}
\begin{split}
\frac{\varepsilon_0}{Re}\|\boldsymbol u\|_1^2 &\le  \|\boldsymbol f\|_{-1}\|\boldsymbol u\|_1+\frac{\varepsilon_0}{Re}\|\boldsymbol g_\mu\|_1\|\boldsymbol u\|_1+C\|\boldsymbol g_\mu\|_0\|\boldsymbol u\|_0\\
&\quad +C\|\boldsymbol g_\mu\|_1^2\|\boldsymbol u\|_1
+C\|\beta\|_{0,\infty}\|\boldsymbol g_\mu\|_0\|\boldsymbol
u\|_1^2+C\|\beta\|_{0,\infty}\|\boldsymbol g_\mu\|_{0,4}^2\|\boldsymbol u\|_1\,.
\end{split}
\end{equation*}
From Sobolev embedding  $H^1(\Omega)\hookrightarrow L^4(\Omega)$  we deduce for sufficiently small $\|\boldsymbol g_\mu\|_1$
\begin{equation}\label{s3eq32}
\|\boldsymbol u\|_1\le\frac{\|\boldsymbol f\|_{-1}+C_1\|\boldsymbol
g_\mu\|_1+C_2\|\boldsymbol
g_\mu\|_1^2}{\displaystyle\frac{\varepsilon_0}{Re}-C_3\|\beta\|_{0,\infty}\|\boldsymbol g_\mu\|_1}=:C\bigl(\|\boldsymbol g_\mu\|_1, \|\boldsymbol f\|_{-1}\bigr).
\end{equation}
Putting (\ref{s3eq28})-(\ref{s3eq32}) into (\ref{s3eq27}) and using the inequality 
$$\|\boldsymbol f\|_{-1}\|\boldsymbol u_1-\boldsymbol u_2\|_1\le \frac{\varepsilon_0}{4Re}\|\boldsymbol u_1-\boldsymbol u_2\|_1^2+\frac{2 Re}{\varepsilon_0}\|\boldsymbol f\|_{-1}^2$$ 
we obtain
\begin{equation}\label{s3eq33}
\begin{split}
0&\ge \frac{\varepsilon_0}{2 Re}\|\boldsymbol u_1-\boldsymbol
u_2\|_1^2-\frac{2Re}{\varepsilon_0}\|\boldsymbol
f\|_{-1}^2-C\bigl(\|\boldsymbol g_\mu\|_1,\|\boldsymbol
f\|_{-1}\bigr)\,\|\beta\|_{0,\infty}\|\boldsymbol u_1-\boldsymbol u_2\|_1^2\\
&\quad -\frac{\varepsilon_0}{4 Re}\|\boldsymbol u_1-\boldsymbol u_2\|_1^2-C\bigl(\|\boldsymbol g_\mu\|_1,\|\boldsymbol f\|_{-1}\bigr)\|\boldsymbol u_1-\boldsymbol u_2\|_1^2\,.
\end{split}
\end{equation}
For sufficiently small $\|\boldsymbol g_\mu\|_1$, $ \|\boldsymbol f\|_{-1}$ the constant $C(\|\boldsymbol g_\mu\|_1,\|\boldsymbol f\|_{-1})$ in \eqref{s3eq32} gets small and consequently the right hand side of (\ref{s3eq33}) is nonnegative. This implies
$\boldsymbol u_1=\boldsymbol u_2$ and according to Theorem \ref{s3thm8} is $p_1-p_2=0$.
\end{proof}
%==================================================================================
\section{A Channel Flow Problem in Packed Bed Reactors}
In this section, we provide an example of the flow problem in packed bed reactors with numerical solutions at small and relatively large Reynolds numbers to show the nonlinear behavior of the velocity solutions.
Let the reactor channel be represented by $\Omega=(0,L)\times (-R,R)$ where
$R=5$ and $L=60$. 
\begin{figure}[hbt!]
\begin{center}
\includegraphics[scale=0.95]{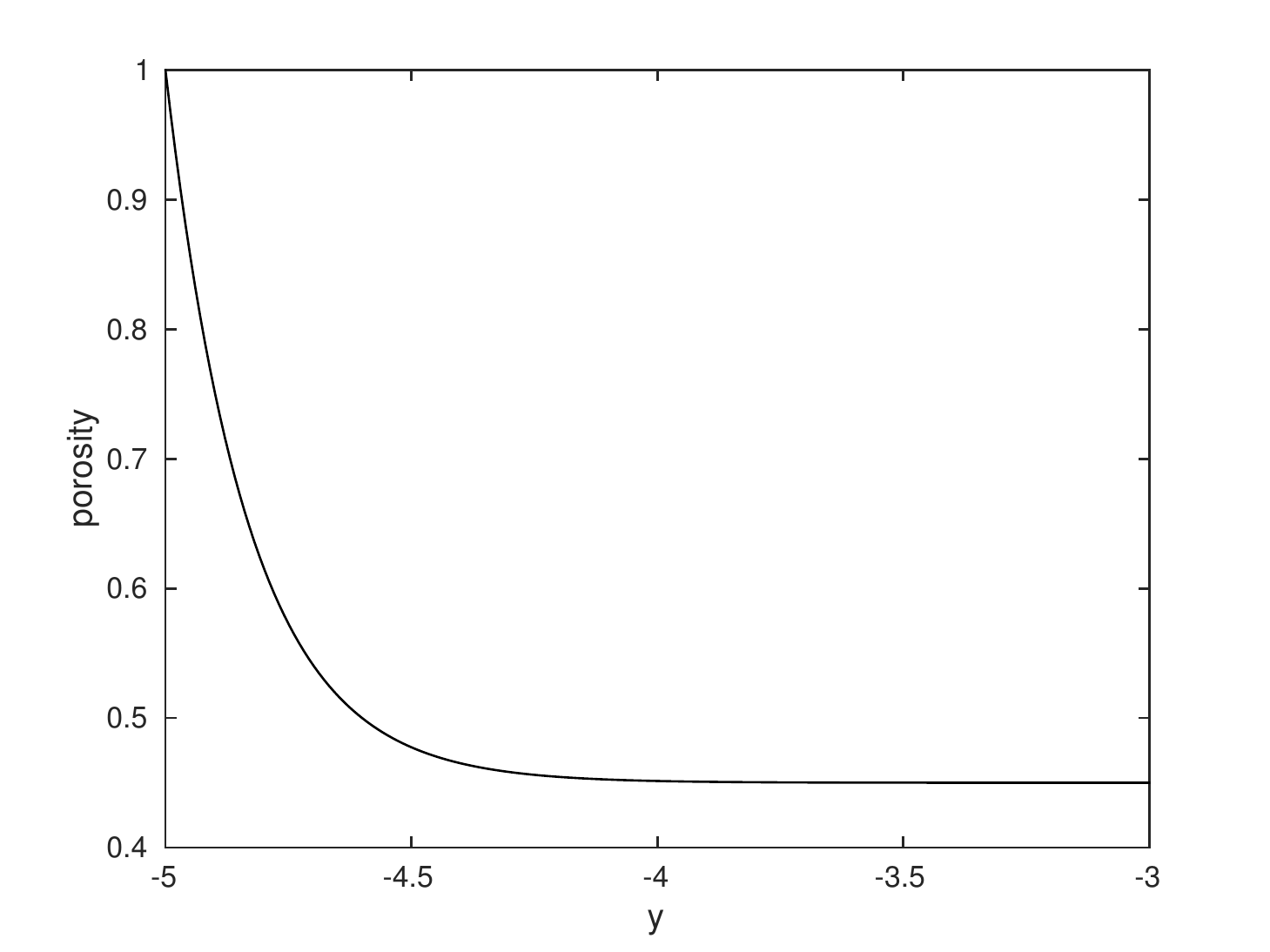}
\end{center}
\caption{Varying porosity.\label{fig_poro}}
\end{figure}
In all computations we use the porosity distribution which is determined experimentally and takes into account the effect of wall channelling in packed bed reactors
\begin{equation} 
\varepsilon(x,y)=\varepsilon(y)=\varepsilon_\infty\left\{1+\frac{1-\varepsilon_\infty}{\varepsilon_\infty}\,e^{-6(R-|y|)}\right\}\,,
\end{equation} 
where $\varepsilon_\infty=0.45$. The distribution of the porosity is presented in Figure~\ref{fig_poro}. 
We  distinguish between the inlet, outlet and membrane parts of domain boundary $\Gamma$, and denote them by $\Gamma_{in}$, $\Gamma_{out}$ and $\Gamma_w$, respectively. Let 
\begin{equation*}
\begin{array}{lcl}
\Gamma_{in}&=&\{(x,y)\in\Gamma:\; x=0\}\,,\\
\Gamma_{out}&=&\{(x,y)\in\Gamma:\; x=L\}\,,\\
\Gamma_{w}&=&\{(x,y)\in\Gamma:\; y=-R,\; y=R\}\,.
\end{array}
\end{equation*}
At the inlet $\Gamma_{in}$ and at the membrane wall we prescribe Dirichlet boundary conditions, namely the plug flow conditions 
\begin{equation*}
\boldsymbol{u}\arrowvert_{\Gamma_{in}}=\boldsymbol{u}_{in}=(u_{in},0)^T\,,
\end{equation*}
and
\begin{equation*}
\boldsymbol{u}\arrowvert_{\Gamma_{w}}=\boldsymbol{u}_{w}=
\begin{cases} 
(0,u_w)^T & \quad\text{for}\quad y=-R\,,\\
(0,-u_w)^T & \quad\text{for}\quad y=R\,,
\end{cases}
\end{equation*}
whereby $u_{in}>0$, $u_w>0$. At the outlet $\Gamma_{out}$ we set the following outflow boundary condition
\begin{equation*}
-\frac{1}{Re}\,\frac{\partial\boldsymbol{u}}{\partial\boldsymbol{n}}+p\boldsymbol{n}=\boldsymbol{0}
\end{equation*}
where $\boldsymbol{n}$ denotes the outer normal. In order to avoid
discontinuity between the inflow and wall conditions we replace constant
profile by trapezoidal one with zero value at the corners. Our
computations are carried out on the Cartesian mesh using biquadratic conforming and discontinuous piecewise linear finite elements for the approximation of the velocity and pressure, respectively. The finite element analysis of the Brinkman-Forchheimer-extended Darcy equation will be conducted in the forthcoming work. The plots of velocity magnitude in fixed bed reactor ($u_w=0$) are
 presented along the vertical axis $x=50$. In the investigated reactor the
 inlet velocity is assumed to be normalized ($u_{in}=1$). Due to the
 variation of porosity we might expect higher velocity at the reactor
 walls $\Gamma_w$. This tunnelling effect can be well observed in Figure~\ref{fig4} which shows the velocity profiles
 for different Reynolds numbers. We remark that the maximum of velocity magnitude decreases with increasing Reynolds numbers.   
\begin{figure}[hbt!]
\begin{center}
\includegraphics[scale=0.55]{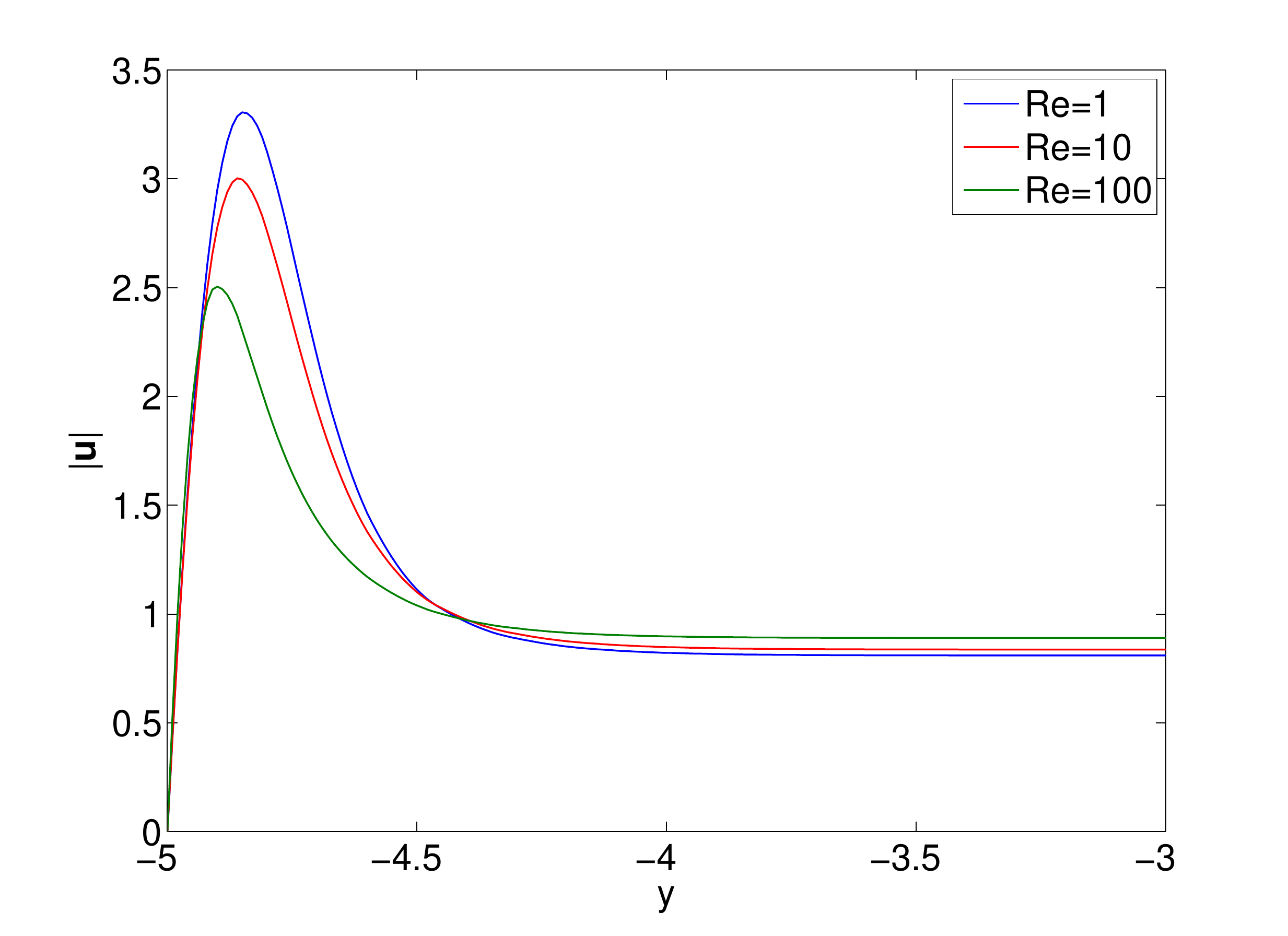}
%\hfill
%\includegraphics[width=0.49\textwidth]{elman1cont.eps}
\end{center}
\caption{Flow profiles in fixed bed reactor at $x=50$.\label{fig4}}
\end{figure}
\section{ Conclusion}
In this work, we have extended the existence and uniqueness of solution result in literature for the porous medium flow problem based on the nonlinear Brinkman-Forchheimer-extended Darcy law. The existing result is valid only for constant porosity and without the considered convection effects, and our result holds for variable porosity and it includes convective effects. We also provided a numerical solution to demonstrate the nonlinear velocity solutions at moderately large Reynolds numbers for which case the Brinkman-Forchheimer-extended Darcy law applies.
%==================================================================================

%%%% Bibliography  %%%%%%%%%%


\begin{thebibliography}{10}
\bibitem{Adams}
R. A. Adams
\newblock {\em {S}obolev {S}paces} 
\newblock Pure and applied mathematics, Academic Press, 1995.

\bibitem{Bey_1}
O. Bey.
\newblock Str{\"o}mungsverteilung und W{\"a}rmetransport in Sch{\"u}ttungen
\newblock {\em Nummer 570 in Fortschritt-Berichte, VDI Reihe 3, Verfahrenstechnik, D{\"u}sseldorf: VDI Verlag}, 1998.

\bibitem{Ergun_1}
S. Ergun.
\newblock Fluid Flow Through Packed Columns
\newblock {\em Chemical Engineering Progress}, 48(2):89--94, 1952.

\bibitem{Hornung_1}
U. Hornung.
\newblock {\em Homogenization and Porous Media},
\newblock Springer-Verlag, 1997.

\bibitem{Kaloni}
P. N. Kaloni and Jianlin Guo.
\newblock Steady nonlinear double-diffusive convection in a porous medium based upon the {B}rinkman-{F}orchheimer model,
\newblock {\em J. Math. Anal. Appl.}, 204(1):138--155, 1996.

\bibitem{BernardiLaval}
Christine Bernardi, Fr{\'e}d{\'e}ric Laval, Brigitte M{\'e}tivet and Bernadette Pernaud-Thomas.
\newblock Finite element approximation of viscous flows with varying density,
\newblock {\em SIAM J. Numer. Anal.}, 29(5):1203--1243, 1992.

\bibitem{giro}
V. Girault and P.-A. Raviart.
\newblock {\em Finite Element Methods for {N}avier--{S}tokes Equations. Theorie and Algorithms}, 
\newblock Springer-Verlag, 1986

\bibitem{Hopf}
E. Hopf.
\newblock Ein allgemeiner {E}ndlichkeitssatz der {H}ydrodynamik,
\newblock {\em Math. Ann.}, 117:764--775, 1941.

\bibitem{Lions_1}
J. L. Lions.
\newblock {\em Quelques methodes de resolution des problemes aux limites non lineaires}, 
\newblock Gauthier-Villars, Paris, 1969.
\bibitem{Zhao_1}
T.  Zhao.
\newblock {\em Investigation of Landslide-Induced Debris Flows by the
DEM and CFD}, 
\newblock  Ph.D thesis, University of Oxford, 2014.
 \bibitem{Upton_1}
K. Upton.
\newblock {\em Multi-scale modelling of borehole yields in chalk aquifers}, 
\newblock  Ph.D thesis, Imperial College London, 2015.
 \bibitem{Grillo_1}
A. Grillo, M. Carfagna, and S Federico.
\newblock {\em The Darcy-Forchheimer law for modelling fluid in biological tissues}, 
\newblock  Theoret. Appl. Mech. TEOPM7, Vol.41, No.4, 283-322, Belgrade 2014.
\bibitem{Sobieski_1}
W. Sobieski, A. Trykozko.
\newblock {\em Darcy's and Forchheimer's  law in practice. Part 1. the experiment}, 
\newblock  Technical Sciences 17(4), 321–335, 2014.
\bibitem{Lal_1}
A. S. Lal and A. C Menon.
\newblock {\em Design of a new porous medium heat exchanger for an aircraft refrigeration system}, 
\newblock IJTEL, ISSN: 2319-2135, VOL.3, NO.4,  545-548, 2014.





\end{thebibliography}
\end{document}